%---------------------------------------------
% Article Class (This is a LaTeX2e document)  ********************
% ----------------------------------------------------------------

% \parskip        \baselineskip

\documentclass[11pt]{article}
\usepackage[english]{babel}
\usepackage{amsmath,amsthm}
\usepackage[T1]{fontenc}
\usepackage{amsfonts}
\usepackage{mathrsfs}
\usepackage{color}
\usepackage{bbm}
\usepackage{enumerate}
\usepackage{amsrefs}
\usepackage{mathtools}
\usepackage{hyperref}

%% Page setup----------------------------------
%\topmargin      0.0truein
% \oddsidemargin  0.0truein
% \evensidemargin 0.0truein
% \textheight     8.5truein
% \textwidth      6.2truein
% \headheight     0.0truein
% \headsep        0.3truein
%%-----------------------------

% THEOREMS -------------------------------------------------------
\newtheorem{theorem}{Theorem}[section]
\newtheorem{corollary}[theorem]{Corollary}
\newtheorem{lemma}[theorem]{Lemma}
\newtheorem{proposition}[theorem]{Proposition}
\newtheorem{assumption}{Assumption}
\theoremstyle{definition}
\newtheorem{definition}[theorem]{Definition}
\theoremstyle{remark}
\newtheorem{remark}[theorem]{Remark}
\newtheorem{example}[theorem]{Example}
\numberwithin{equation}{section}
% ----------------------------------------------------------------
\begin{document}
\def\Pro{{\mathbb{P}}}
\def\E{{\mathbb{E}}}
\def\e{{\varepsilon}}
\def\veps{{\varepsilon}}
\def\ds{{\displaystyle}}
\def\nat{{\mathbb{N}}}
\def\Dom{{\textnormal{Dom}}}
\def\dist{{\textnormal{dist}}}
\def\R{{\mathbb{R}}}
\def\O{{\mathcal{O}}}
\def\T{{\mathcal{T}}}
\def\Tr{{\textnormal{Tr}}}
\def\sgn{{\textnormal{sign}}}
\def\I{{\mathcal{I}}}
\def\A{{\mathcal{A}}}
\def\H{{\mathcal{H}}}
\def\S{{\mathcal{S}}}

\title{Existence and uniqueness of global solutions to the stochastic heat equation with super-linear drift on an unbounded spatial domain}%
\author{M. Salins\\ Boston University \\ msalins@bu.edu}
%\thanks{}%
%\date{}%
% ----------------------------------------------------------------
\maketitle

\begin{abstract}
  We prove the existence and uniqueness of global solutions to the semilinear stochastic heat equation on an unbounded spatial domain with forcing terms that grow superlinearly and satisfy an Osgood condition $\int 1/|f(u)|du = +\infty$ along with additional restrictions. For example, consider the forcing  $f(u) =  u \log(e^e + |u|)\log(\log(e^e+|u|))$. A new dynamic weighting procedure is introduced to control the solutions, which are unbounded in space.
\end{abstract}

%\begin{keyword}[class=MSC2020]
%\kwd[Primary ]{60F10}
%\kwd{60H15}
%\kwd[; secondary ]{35R60}
%\end{keyword}
%
%\begin{keyword}
%\kwd{stochastic reaction-diffusion equation}
%\kwd{large deviations}
%\kwd{stochastic partial differential equation}
%\end{keyword}
%
%\end{frontmatter}
%%%%%%%%%%%%%%%%%%%%%%%%%%%%%%%%%%%%%%%%%%%%%%
%% Please use \tableofcontents for articles %%
%% with 50 pages and more                   %%
%%%%%%%%%%%%%%%%%%%%%%%%%%%%%%%%%%%%%%%%%%%%%%
%\tableofcontents

%%%%%%%%%%%%%%%%%%%%%%%%%%%%%%%%%%%%%%%%%%%%%%
%%%% Main text entry area:
\section{Introduction} \label{S:intro}
  We prove the existence and uniqueness of global solutions to the semilinear heat equation with additive noise
  \begin{equation} \label{eq:SPDE}
    \frac{\partial u}{\partial t}(t,x)  = \frac{1}{2}\Delta u(t,x) + f(u(t,x)) + \sigma \dot{W}(t,x), x \in \mathbb{R}^d
  \end{equation}
  in the situation where the nonlinearity $f:\mathbb{R} \to \mathbb{R}$ is superlinear, but satisfies the following Osgood condition \cite{o-1898}
  \begin{equation} \label{eq:osgood-intro}
    \int_c^\infty \frac{1}{ |f(x)|} dx = +\infty \text{ for all } c>0.
  \end{equation}

  As a motivating examples consider
  $$f(x) = x \log(e + |x|) \text{ or }f(x) = x \log(e^e+|x|) \log (\log( e^e + |x|)).$$
  %We assume that $\sigma$ is globablly Lipschitz continuous.

  In the classical theory of ordinary differential equations (ODE), solutions to the differential equation
  \begin{equation}
    \frac{d\vartheta }{dt}(t,x) = f(\vartheta(t,x)), \ \ \ \vartheta(0) = x
  \end{equation}
  are finite for all $t>0$ if and only if  $f$ satisfies the Osgood condition \eqref{eq:osgood-intro}. In fact, the solution is explicit and
  \begin{equation}
    \vartheta(t,x) = F^{-1}(F(x) + t)
  \end{equation}
  where
  \begin{equation}
    F(y) := \int_c^y \frac{1}{f(y)}dy.
  \end{equation}

  %For finite dimensional stochastic differential equations,
%  \begin{equation}
%    dX(t) = f(X(t)) + \sigma(X(t))dW(t)
%  \end{equation}
%  the blow-up condition is described by the related Feller's test for explosions \cite[Chapter 5.C]{ks-book}.

  Unlike in the ODE setting, the Osgood condition, without additional assumptions, does not  determine whether the solutions to deterministic partial differential equations (PDEs) blow up. Consider the deterministic elliptic PDE
  \begin{equation} \label{eq:determ-PDE}
    \frac{\partial v}{\partial t}(t,x)  = \frac{1}{2}\Delta v(t,x) + f(v(t,x)).
  \end{equation}
  If we impose the additional restriction that the initial data $u(0,x)$ is uniformly bounded, then the solution $u(t,x)$ never explodes if and only if \eqref{eq:osgood-intro} holds \cite{lrs-2013}. When the initial data of an elliptic PDE is unbounded, however, there exist nonlinearities $f$ that satisfy the Osgood condition \eqref{eq:osgood-intro} and integrable initial data $u(0,x)$, such that the solution blows up in finite time (or maybe instantly) \cite{lrs-2013, lrs-2014}.

  Several authors have investigated the blow-up properties of stochastic heat equations with Osgood forcing on a bounded domain. Bonder and Groisman  \cite{bg-2009} proved that for a one-dimensional stochastic heat equation with additive noise on a bounded spatial interval, the converse of \eqref{eq:osgood-intro} implies that solutions blow up with probability one. Dalang, Khoshnevisan, and Zhang \cite{dkz-2019} also studied the one-dimensional bounded domain setting and showed that global solutions to the stochastic heat equation can exist even when the forcing term $f$ and the multiplicative noise $\sigma$ are both super-linear. Specifically, they demonstrate that if $f$ grows slower than $|x|\log|x|$ and $\sigma$ grows slower that $|x|(\log|x|)^{\frac{1}{4}}$ then solutions do not blow up.

  Foondun and Nualart \cite{fn-2021} strengthen the result of \cite{bg-2009} and prove that on a multidimensional bounded spatial domain, global solutions to the semilinear heat equation with additive noise exist if \eqref{eq:osgood-intro} holds and solutions blow up if \eqref{eq:osgood-intro} fails.  Foondun and Nualart also investigate the unbounded domain setting and proved that global solutions cannot exist if  \eqref{eq:osgood-intro} fails.

  A similar problem has been investigated for the stochastic wave equation on bounded and unbounded spatial domains \cite{ms-2021}, \cite{fn-2020}.

  A recent preprint \cite{sz-2021} provides a partial converse to Foondun and Nualart's result for stochastic heat equations with spatial domain $\mathbb{R}$. They demonstrate that when $f(x)$ grows slower than $C(1+ |x|\log(|x|))$ and there is a bounded multiplicative noise  that solutions to \eqref{eq:SPDE} never explode.

  In this paper, we extend this result to any spatial dimension and a much larger class of forcing terms that satisfy the Osgood condition \eqref{eq:osgood-intro}. Specifically, we show that if $f$ satisfies \eqref{eq:osgood-intro} and some additional restrictions (see Assumption \ref{assum:f} in the following section), then there exists a mild solution to \eqref{eq:SPDE} that never blows up. This setup allow nonlinearities that grow much faster than $|x|\log(|x|)$ considered in the recent preprint \cite{sz-2021}. Based on the results \cite{lrs-2013,lrs-2014} for the blow-up of deterministic PDEs with Osgood forcing, the Osgood condition \eqref{eq:osgood-intro} on its own should not be sufficient to imply global existence of the stochastic heat equation, but we leave the construction of a counterexample for future work.

  The proof of the existence result is based on a dynamic weighting procedure. A mild solution to \eqref{eq:SPDE}  is the solution to the integral equation
  \begin{align} \label{eq:mild}
    u(t,x) = &\int_{\mathbb{R}^d} G(t,x-y)u(0,y)dy + \int_0^t \int_{\mathbb{R}^d} G(t-s,x-y)f(u(s,y))dyds \nonumber \\
    &+ \sigma \int_0^t \int_{\mathbb{R}^d} G(t-s,x-y)W(dyds).
  \end{align}
  In the above equation
  \begin{equation} \label{eq:heat-kernel}
    G(t,x):= (2\pi t)^{-\frac{d}{2}} \exp \left(-\frac{|x|^2}{2t} \right)
  \end{equation}
  is the Gaussian heat kernel.
  The stochastic integral $Z(t,x) = \sigma \int_0^t \int_{\mathbb{R}^d} G(t-s,x-y)W(dyds)$ is a Gaussian process that is stationary in space. By \cite{qw-1973}, $Z(t,x)$ is unbounded in $x$ for any $t>0$, but for any finite $T>0$,
  \begin{equation}
    \sup_{t \in [0,T]}\sup_{x \in \mathbb{R}^d}\frac{|Z(t,x)|}{\rho_0(x)} <+\infty \text{ with probability one}
  \end{equation}
  where the weight
  \begin{equation}
    \rho_0(x) := \sqrt{\log(e + |x|^2)}.
  \end{equation}

  If the forcing term $f$ grows no faster than linearly, then a Gr\"onwall argument proves that
  \begin{equation} \label{eq:rho0-bound-intro}
    \sup_{t \in [0,T]}\sup_{x \in \mathbb{R}^d}\frac{|u(t,x)|}{\rho_0(x)} <+\infty \text{ with probability one}
  \end{equation}
  as well. In the presence of a superlinear forcing term, like $f(x) = x \log|x|$, however, \eqref{eq:rho0-bound-intro} will fail to be true. Intuitively, this is due to the fact that a superlinear forcing causes unbounded solutions to stretch in space as well as time.

  To deal with the superlinear forcing we build a dynamic weight that satisfies
  \begin{equation} \label{eq:dynamic-weight-intro}
    \begin{cases}
      \frac{\partial \rho}{\partial t} (t,x) \geq \frac{1}{2}\Delta \rho(t,x) + h_\alpha(\rho(t,x))\\
      \rho(0,x) = \rho_0(x).
    \end{cases}
  \end{equation}
  In the above inequality, $h(x)$ is an increasing, convex function that dominates $f$ and satisfies \eqref{eq:osgood-intro}. For appropriate $\alpha>1$, $h_{\alpha}(x) = \frac{h(x^\alpha)}{x^{\alpha-1}}$. The function $h_\alpha$ grows asymptotically faster than both $h$ and $f$ but still satisfies the Osgood condition \eqref{eq:osgood-intro}. With this appropriate choice of dynamic weighting, we will be able to prove, via a sequence of approximations, that a mild solution $u(t,x)$ solving \eqref{eq:mild} exists and that for any finite time horizon $T>0$,
  \begin{equation}
    \sup_{t \in [0,T]} \sup_{x \in \mathbb{R}^d} \frac{|u(t,x)|}{\rho(t,x)}< +\infty \text{ with probability one}
  \end{equation}
  implying the existence of a global solution.

  In general, weak (in the PDE sense) solutions to \eqref{eq:SPDE} on unbounded spatial domains are not unique. In the simplest case of a deterministic heat equation ($f \equiv 0$ and $\sigma \equiv 0$), Tychonoff \cite{t-1935} proved that solutions are not unique, but that solutions are unique in the class of functions that grow slower than $Me^{c|x|^2}$ in space. We show that, under an appropriate assumption on the forcing terms, that weak solutions to \eqref{eq:SPDE} are unique in the class of random fields that grow slower than $e^{|x|^\nu}$ for some $\nu \in (0,2)$.

  In Section \ref{S:assum} we outline the assumptions on the forcing term, the noise, and the initial data, and state the main results. In Section \ref{S:examples}, we give examples of forces that satisfy our assumptions and an example of a force that satisfies the Osgood condition \eqref{eq:osgood-intro}, but grows too quickly. In Section \ref{S:stoch-conv} we recall known results about the growth rate of the stochastic convolution. In Section \ref{S:weight} we introduce the dynamic weight. In Section \ref{S:determ-forcing}, we use the dynamic weighting procedure to prove the existence of mild solutions to an associated problem with deterministic forcing. The proof of the existence result is in Section \ref{S:proof}. The proof of uniqueness is in Section \ref{S:uniq}.
\section{Assumptions and the result} \label{S:assum}
\begin{assumption} \label{assum:f}
  \begin{enumerate}
  \item The drift term $f:\mathbb{R} \to \mathbb{R}$ is locally Lipschitz continuous. Specifically, there exists an increasing function $L: [0,+\infty) \to [0,+\infty)$ such that for all $|u_1|\leq R, |u_2|\leq R$,
      \begin{equation} \label{eq:locally-Lip}
        |f(u_1) - f(u_2)| \leq L(R) |u_1 - u_2|.
      \end{equation}
  \item There exists a function $h: [0,+\infty) \to (0,+\infty)$ that is strictly positive, twice-differentiable, increasing, and strictly convex that satisfies the Osgood condition
  \begin{equation} \label{eq:osgood}
    \int_0^\infty \frac{1}{h(x)}dx = +\infty,
  \end{equation}
  grows no faster than polynomially,  meaning that there exist $C>0$ and $p>1$ such that
  \begin{equation} \label{eq:h-polynomial-growth}
    h(x) \leq C (1 + |x|^p),
  \end{equation}
  and dominates $f$ in the sense that
  \begin{equation} \label{eq:dominate}
    |f(x)| \leq h(|x|).
  \end{equation}
  \item  For any $t>0$ there exists $\nu_t \in (0,2)$ and $C_t>0$ such that for any $x>0$,
  \begin{equation}  \label{eq:growth-restriction}
    H^{-1}(H(x) + t) \leq C_t \exp(\exp(|x|^{\nu_t}))
  \end{equation}
  where
  \begin{equation}
    H(x) = \int_0^x \frac{1}{h(y)}dy.
  \end{equation}
  \item In addition to $h$ being convex, we assume that for $\alpha >1$,
  \begin{equation} \label{eq:h-alpha-assum}
    h_\alpha(x) := \frac{h(x^\alpha)}{x^{\alpha-1}}
  \end{equation}
  is convex for $x\geq1$.
  \end{enumerate}
\end{assumption}

\begin{remark}
  The function $\vartheta(t,x) = H^{-1}(H(x) + t)$ is the unique solution to the ordinary differential equation where $x$ represents the initial data
  \begin{equation} \label{eq:ode}
    \frac{d \vartheta}{dt}(t,x) = h(\vartheta(t,x)),\ \ \  \vartheta(0,x)=x.
  \end{equation}
  The restriction on the growth rate of $\vartheta$ in  \eqref{eq:growth-restriction} is a technical assumption that is required for the dynamic weighting method used in section \ref{S:determ-forcing} to work properly. We expect that the Osgood condition, on its own, does not imply the existence of global solutions without imposing a growth rate like \eqref{eq:growth-restriction}.
\end{remark}

\begin{remark}
  The condition that $h_\alpha$ defined in \eqref{eq:h-alpha-assum} is convex is very natural. $x \mapsto h(x)$ grows superlinearly so $\frac{h(x^\alpha)}{x^{\alpha-1}}$ should also grow superlinearly. This restriction removes pathological cases from consideration.
\end{remark}

\begin{assumption} \label{assum:noise}
  The Gaussian noise $\dot{W}$ is white in time and spatially homogeneous (see \cite{walsh-book}). The covariance is formally described by
  \begin{equation} \label{eq:covariance}
    \E \dot{W}(t,x)\dot{W}(s,y) =  \delta(t-s) \Lambda(x-y)
  \end{equation}
  where $\delta$ is the Dirac delta measure and $\Lambda$ is a positive and positive definite generalized function whose Fourier transform $\mu = \mathcal{F}(\Lambda)$ satisfies the strong Dalang assumption (see \cite{ss-2002}). Namely, there exists $\eta\in (0,1)$ such that
  \begin{equation} \label{eq:strong-dalang}
    \int_{\mathbb{R}^d} \frac{\mu(d\xi)}{1 + |\xi|^{2(1-\eta)}} < +\infty
  \end{equation}
\end{assumption}

\begin{assumption} \label{assum:initial-data}
  The initial data $x \mapsto u(0,x)$ is continuous and
  \begin{equation}
    \sup_{x \in \mathbb{R}^d} \frac{|u(0,x)|}{\sqrt{\log(e + |x|^2)}} <+\infty.
  \end{equation}
\end{assumption}

%\begin{definition}
%A \textit{mild solution} of \eqref{eq:SPDE} is an adapted random field $u(t,x)$ satisfying the integral equation
%\begin{align} \label{eq:mild}
%  u(t,x) = & \int_0^y \int_{\mathbb{R}^d} G(t-s,x-y)f(u(s,y))dy ds \nonumber \\
%  &+ \int_0^t \int_{\mathbb{R}^d} G(t-s,x-y) \sigma(u(s,y))W(dyds).
%\end{align}
%A mild solution is called \textit{global} if and only if $u(t,x)$ is finite for all $(t,x) \in [0,+\infty)\times \mathbb{R}^d$ with probability one.
%\end{definition}
\begin{definition}
  A mild solution $u(t,x)$ is called a global solution if $u(t,x)$ solves \eqref{eq:mild} and $u(t,x)$ is finite for all $(t,x) \in [0,+\infty)\times \mathbb{R}^d$ with probability one.
\end{definition}

The main results of the paper are Theorem \ref{thm:main} and Theorem \ref{thm:uniq}.

\begin{theorem} \label{thm:main}
  Under Assumptions \ref{assum:f}, \ref{assum:noise}, and \ref{assum:initial-data}, there exists a global mild solution solving \eqref{eq:mild}.
\end{theorem}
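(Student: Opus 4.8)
The plan is to construct the global mild solution as a limit of solutions to truncated problems, using the dynamic weight $\rho(t,x)$ of Section~\ref{S:weight} as an a priori control that is stable under the truncation. First I would fix a finite horizon $T>0$ and let $Z(t,x)$ be the stochastic convolution, which by Section~\ref{S:stoch-conv} satisfies $\|Z\|_{T,\rho_0}:=\sup_{t\le T}\sup_x |Z(t,x)|/\rho_0(x)<\infty$ a.s. Writing $v=u-Z$, the function $v$ should solve the random PDE $\partial_t v=\tfrac12\Delta v+f(v+Z)$ with $v(0,\cdot)=u(0,\cdot)$; since $|f(v+Z)|\le h(|v|+|Z|)$ and $h$ is increasing and convex, the deterministic-forcing analysis of Section~\ref{S:determ-forcing} applies pathwise with a forcing term that is dominated by $h$. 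For each $n$ I would introduce the truncation $f_n(u)=f(u)$ for $|u|\le n$, extended to be globally Lipschitz for $|u|>n$ (using the local Lipschitz function $L$), so that a unique global mild solution $u_n$ exists for the $f_n$-equation by standard Walsh theory under Assumption~\ref{assum:noise}.

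The heart of the argument is the a priori bound: I claim that for each $\omega$ there is a (random) constant so that $|u_n(t,x)|\le K(\omega)\,\rho(t,x)$ for all $n$, all $t\le T$, all $x$, \emph{uniformly in $n$}. This is where the dynamic weight \eqref{eq:dynamic-weight-intro} does its job. The idea is a comparison/maximum principle argument: if $\rho$ is a supersolution of $\partial_t\rho\ge\tfrac12\Delta\rho+h_\alpha(\rho)$ with $\rho(0,\cdot)\ge\rho_0\ge |u(0,\cdot)|$, then by choosing $\alpha>1$ appropriately (so $h_\alpha$ dominates $h$ and hence $f_n$ composed with the $Z$-shift, on the relevant range, while still being Osgood) one shows $K\rho$ is a supersolution of the $v_n$-equation for $K$ large enough depending on $\|Z\|_{T,\rho_0}$, and Feynman--Kac/comparison gives $|v_n(t,x)|\le K\rho(t,x)$; adding back $Z$ and enlarging $K$ controls $u_n$. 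Crucially the bound does not involve $n$, so once $\rho(t,x)<\infty$ everywhere (which follows from growth restriction \eqref{eq:growth-restriction}, keeping $\rho$ below $Ce^{e^{|x|^{\nu}}}$, hence locally bounded), the bound localizes $u_n$ on every compact set independently of $n$.

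Given the uniform bound, I would pass to the limit: on a fixed compact $[0,T]\times B_R$ the forcing terms $f_n(u_n)=f(u_n)$ agree for $n>K\sup_{B_R}\rho$, so the $u_n$ are eventually all solving the \emph{same} equation there, and a standard Picard/fixed-point contraction on the weighted space (using local Lipschitz continuity of $f$ and the regularizing properties of $G$, as developed in Section~\ref{S:proof}) shows $u_n$ is Cauchy, with limit $u$ satisfying \eqref{eq:mild} on $[0,T]\times B_R$. Consistency across $R$ and $T$ (the solution of \eqref{eq:mild} being unique on each compact region by local Lipschitz continuity) patches these together into a single $u$ defined on $[0,\infty)\times\mathbb{R}^d$, finite everywhere a.s., which is the desired global mild solution. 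I expect the main obstacle to be the a priori comparison estimate: making the comparison principle rigorous for the SPDE (which is really a pathwise PDE with rough-in-$x$ but continuous forcing $Z$) and verifying that the shifted, truncated nonlinearity is genuinely dominated by $h_\alpha(\rho)$ uniformly in $n$ and in $x$ — this is precisely where convexity of $h_\alpha$ (Assumption~\ref{assum:f}(4)) and the self-improving substitution $h_\alpha(x)=h(x^\alpha)/x^{\alpha-1}$ are used, and it is the technically delicate part of the whole construction.
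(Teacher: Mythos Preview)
Your overall architecture matches the paper's: reduce pathwise to a random PDE by subtracting the stochastic convolution (and the heat flow of the initial data), truncate, obtain a uniform a priori bound against the dynamic weight $\rho(t,x)$, and pass to the limit. Two of the mechanisms you propose, however, do not work as stated.

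\textbf{The a priori bound.} You assert that $K\rho$ is a supersolution of the $v_n$-equation for a fixed constant $K$ large enough. This fails. Since $\rho$ only satisfies $\partial_t\rho\ge\frac12\Delta\rho+h_\alpha(\rho)$, the supersolution property for $K\rho$ would require $Kh_\alpha(\rho(t,x))\ge h(K\rho(t,x))$ for all $x$ (after absorbing the $z$-shift), but Lemma~\ref{lem:convexity} only gives $h(K\rho)\le K h_\alpha(\rho)+\rho\,h_{\frac{\alpha}{\alpha-1}}(K)$, and the residual $\rho(t,x)\,h_{\frac{\alpha}{\alpha-1}}(K)$ is unbounded in $x$ for every fixed $K$. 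The paper (Lemma~\ref{lem:determ-existence-bounded}) does not fix $K$ in advance. It sets $q_N=(v_N+M\rho_0)/\rho$, which lies in $C_0(\mathbb{R}^d)$ so its spatial supremum is attained at some $x_t$; there the convexity splitting of Lemma~\ref{lem:convexity} cancels the term $q_N h_\alpha(\rho)$ against the supersolution property of $\rho$, leaving
\[
  \frac{d^-}{dt}\sup_{x} q_N(t,x)\ \le\ h_{\frac{\alpha}{\alpha-1}}\!\Big(\sup_{x} q_N(t,x)\Big)+CM.
\]
This is an Osgood ODE (Lemma~\ref{lem:h-alpha}), and the bound $K=K(T,M)$ arises from \emph{solving} it. Convexity is thus used not to make $h_\alpha$ dominate the nonlinearity outright, but to split $h(q\rho)$ into a $\rho$-part absorbed by the weight and a $q$-part controlled by the Osgood condition on $h_{\frac{\alpha}{\alpha-1}}$.

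\textbf{Passing to the limit.} Your claim that on a compact set $[0,T]\times B_R$ the $u_n$ eventually all solve the same equation, and are therefore Cauchy there, overlooks the nonlocality of the mild formulation \eqref{eq:mild}: $u_n(t,x)$ for $x\in B_R$ depends on $f_n(u_n(s,y))$ for all $y\in\mathbb{R}^d$, and since $\rho$ is unbounded no finite $n$ makes $f_n(u_n)=f(u_n)$ globally. The paper (Theorem~\ref{thm:existence}) instead combines the polynomial growth \eqref{eq:h-polynomial-growth} of $h$ with the exponential bound \eqref{eq:rho-exp} on $\rho$ to get $|f_N(u_N(t,x))|\le C_T e^{C_T|x|}$ uniformly in $N$; heat-kernel regularization then makes $\{v_N\}$ equicontinuous on compacta, and Arzel\`a--Ascoli together with dominated convergence yields a subsequential limit solving the untruncated integral equation.
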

The proof of Theorem \ref{thm:main} is in Section \ref{S:proof}.

We introduce the following assumption that implies a uniqueness result.

\begin{assumption} \label{assum:Lip}
  In addition to Assumption \ref{assum:f}, assume that there exists $\e \in (0,1)$ such that the local Lipschitz constants $L(R)$ defined in \eqref{eq:locally-Lip} satisfy
  \begin{equation} \label{eq:Lip-over-log}
    \lim_{R \to +\infty} \frac{L(R)}{(\log(R))^{1 + \e} } = 0.
  \end{equation}
\end{assumption}
\begin{remark}
  Assumption \ref{assum:Lip} is very reasonable. Because of the Osgood condition \eqref{eq:osgood}, for any $\e>0$,
  \begin{equation}
    \liminf_{|x| \to \infty} \frac{|f(x)|}{|x| (\log|x|)^{1 + \e}} =0.
  \end{equation}
  If the $\liminf$ were positive, then $\int_c^\infty \frac{1}{|f(x)|} <+\infty$ for some $c>0$.
  Assumption \ref{assum:Lip} more or less requires that the $\limsup$ is also $0$, while imposing some reasonable local Lipschitz continuity behavior as well.
\end{remark}

\begin{definition}
  A random field $u(t,x)$ is a weak solution to \eqref{eq:SPDE} if for any twice-differentiable test function $\phi: \mathbb{R}^d \to \mathbb{R}$ with compact support,
  \begin{align} \label{eq:weak}
    &\int_{\mathbb{R}^d} u(t,x)\phi(x)dx - \int_{\mathbb{R}^d} u(0,x)\phi(x)dx  \nonumber\\
    &=\int_0^t \int_{\mathbb{R}^d} u(s,x) \frac{1}{2}\Delta \phi(x)dxds + \int_0^t \int_{\mathbb{R}^d} f(u(s,x))\phi(x)dxds \nonumber\\
    &+ \int_0^t \int_{\mathbb{R}^d}\sigma\phi(x)W(dxds).
  \end{align}
\end{definition}

\begin{theorem} \label{thm:uniq}
  Assume Assumptions \ref{assum:f}, \ref{assum:noise},  \ref{assum:initial-data}, and \ref{assum:Lip}.
  Let $\nu \in (0,2/(1+\e))$ and $T>0$. There exists at most one weak solution to $u(t,x)$ to \eqref{eq:SPDE} for $t \in [0,T]$ that satisfies the growth condition
  \begin{equation}
    \Pro \left(\sup_{t \in [0,T]} \sup_{x \in \mathbb{R}^d} |u(t,x)|e^{-|x|^\nu}< +\infty \right)=1.
  \end{equation}
\end{theorem}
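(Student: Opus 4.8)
The plan is to exploit that the noise in \eqref{eq:SPDE} is additive, so that uniqueness becomes an essentially deterministic, pathwise statement. Suppose $u_1,u_2$ are two weak solutions on $[0,T]$ satisfying the growth bound and set $v:=u_1-u_2$. Subtracting the two instances of \eqref{eq:weak} (and using a countable dense family of test functions to fix a single null set), for $\Pro$-a.e.\ $\omega$ and every test function $\phi$,
\begin{equation*}
  \int_{\R^d}v(t,x)\phi(x)\,dx=\int_0^t\!\!\int_{\R^d}v(s,x)\tfrac12\Delta\phi(x)\,dxds+\int_0^t\!\!\int_{\R^d}b(s,x)v(s,x)\phi(x)\,dxds ,
\end{equation*}
where the stochastic integrals $\int_0^t\int_{\R^d}\sigma\phi\,W(dxds)$ cancel, $v(0,\cdot)\equiv0$ (both solutions carry the initial data of Assumption \ref{assum:initial-data}), and $b(s,x):=(f(u_1)-f(u_2))/v$ is the difference quotient, taken to be $0$ where $v=0$. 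On the full-probability event on which $M_\omega:=\sup_{s\le T,\,x}(|u_1|\vee|u_2|)e^{-|x|^\nu}<\infty$, the local Lipschitz bound \eqref{eq:locally-Lip} combined with Assumption \ref{assum:Lip} (which gives $L(R)\le C(1+(\log(R\vee e))^{1+\e})$) yields
\begin{equation*}
  |b(s,x)|\le C_{\omega,T}\big(1+|x|^{\beta}\big),\qquad \beta:=\nu(1+\e) .
\end{equation*}
Note $\nu<\beta$, and the constraint $\nu<2/(1+\e)$ is precisely what makes $\beta<2$. Thus, almost surely, $v$ is a deterministic weak solution of $\partial_tv=\tfrac12\Delta v+b(t,x)v$ with $b$ locally bounded, $|v(t,x)|\le M_\omega e^{|x|^\nu}$, $v(0,\cdot)=0$; it remains to show any such $v$ vanishes on $[0,T]\times\R^d$.

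This is a Tychonoff-type uniqueness problem; because the potential $b$ grows, a static Gaussian weight does not close the estimate, so I would use a \emph{time-dependent} weight. First, since $b\in L^\infty_{loc}$ and $v$ is locally bounded (growth hypothesis), interior parabolic $L^p$-regularity upgrades $v$ to $W^{2,1}_{p,loc}$ for every $p$ (in particular $v$ is continuous, the equation holds a.e., and $\nabla v\in L^2_{loc}$), which legitimizes a weighted energy identity. Fix $\gamma\in(\beta,\,1+\tfrac\beta2)$ — nonempty because $\beta<2$, and then automatically $\nu<\beta<\gamma<2$ — and set
\begin{equation*}
  \psi(t,x):=(1+|x|^2)^{\gamma/2}+\lambda t\,(1+|x|^2)^{\beta/2},\qquad \Theta(t):=\int_{\R^d}v(t,x)^2e^{-2\psi(t,x)}\,dx .
\end{equation*}
Since $\psi(t,x)\ge(1+|x|^2)^{\gamma/2}$ with $\gamma>\nu$, the growth bound gives $\Theta(t)<\infty$ for every $t$, and $\Theta(t)\to\Theta(0)=0$ as $t\downarrow0$ by dominated convergence and $v(0,\cdot)=0$.

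Differentiating $\Theta$, inserting the equation, and integrating by parts against a spatial cutoff that is removed as it grows (the commutator terms vanish since $|v|\,|\nabla v|=o(e^{2\psi})$, by the sub-$e^{(1+|x|^2)^{\gamma/2}}$ growth of $v$ and interior gradient estimates), gives
\begin{equation*}
  \Theta'(t)\le-\!\int_{\R^d}|\nabla v|^2e^{-2\psi}\,dx+\int_{\R^d}v^2\big(2|\nabla\psi|^2+|\Delta\psi|+2|b|-2\,\partial_t\psi\big)e^{-2\psi}\,dx .
\end{equation*}
With $\psi$ as above, $|\nabla\psi|^2\le C\big(1+|x|^{2(\gamma-1)}+|x|^{2(\beta-1)}\big)\le C(1+|x|^{\beta})$ (using $\gamma\le1+\tfrac\beta2$ and $\beta\le2$), $|\Delta\psi|\le C$, $|b|\le C_{\omega,T}(1+|x|^{\beta})$, while $\partial_t\psi=\lambda(1+|x|^2)^{\beta/2}\ge c\lambda(1+|x|^{\beta})$; choosing $\lambda$ large (depending on $C_{\omega,T},\gamma,\beta,d$) makes the bracket nonpositive, so $\Theta'\le0$ a.e.\ on $[0,T]$. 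With $\Theta\ge0$ and $\Theta(0)=0$ this forces $\Theta\equiv0$, hence $v\equiv0$ on $[0,T]\times\R^d$; as this holds a.s., $u_1=u_2$.

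I expect the main obstacle to be this last quantitative balance: $v$ may grow like $e^{|x|^\nu}$ while the effective potential grows like $|x|^\beta$ with $\beta=\nu(1+\e)$, and closing the Grönwall inequality requires dominating $|\nabla\psi|^2\sim|x|^{2(\gamma-1)}$, $|b|\sim|x|^\beta$ and $\partial_t\psi\sim|x|^\beta$ simultaneously with a single exponent $\gamma\in(\beta,1+\beta/2)$ — possible exactly because $\beta<2$, i.e.\ $\nu<2/(1+\e)$. A secondary technical point is justifying the weighted energy identity, including the vanishing of boundary terms at spatial infinity, for an a priori merely weak solution; an alternative that sidesteps the regularity discussion is to pass to the mild equation and run the same weight inside a Grönwall/iteration, which instead costs a careful accounting of a finite hierarchy of correction exponents $\gamma_k=2-2^k(2-\gamma)$ (each eventually negative after finitely many steps).
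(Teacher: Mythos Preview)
Your overall strategy matches the paper's: both reduce pathwise to a weighted $L^2$ energy estimate for $v=u_1-u_2$, using the local Lipschitz bound together with Assumption~\ref{assum:Lip} to control the effective potential by $C_\omega(1+|x|^\beta)$ with $\beta=\nu(1+\e)<2$, and then a time-dependent weight whose $\partial_t$ term absorbs everything. The paper regularises via resolvents of a weighted Laplacian rather than invoking interior parabolic $L^p$-regularity, but that is a matter of taste.

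There is, however, a circularity in your specific weight that prevents the bracket from becoming nonpositive. With $\psi(t,x)=(1+|x|^2)^{\gamma/2}+\lambda t(1+|x|^2)^{\beta/2}$, the gradient contains $\lambda t\,\nabla(1+|x|^2)^{\beta/2}$, so $|\nabla\psi|^2$ carries a term of size $\lambda^2 t^2\beta^2(1+|x|^2)^{\beta-1}\le \lambda^2 T^2\beta^2(1+|x|^2)^{\beta/2}$; the constant $C$ in your bound ``$|\nabla\psi|^2\le C(1+|x|^\beta)$'' therefore scales like $\lambda^2$. Since the sink $2\partial_t\psi$ scales only like $\lambda$, taking $\lambda$ large makes the bracket more positive, not less --- you cannot dominate a $\lambda^2$ source with a $\lambda$ sink. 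The paper sidesteps this by putting the large parameter inside the base instead of in front: with a single exponent $\nu_2\in(\beta,2)$ it works with the weight $\exp\!\big(-(2+t)(K+|x|^2)^{\nu_2/2}\big)$, so every bad term has exponent strictly below $\nu_2/2$ in $(K+|x|^2)$ with a coefficient independent of $K$, and choosing $K$ large (not an amplitude $\lambda$) makes the bracket nonpositive on all of $[0,T]$. Your argument is easily repaired either by adopting this shifted weight, or by keeping your $\psi$ but restricting to a short interval $[0,T_0]$ with $\lambda T_0^2$ small and iterating (the constant $C_{\omega,T}$ in the bound on $b$ is fixed on $[0,T]$, so the step size does not degenerate).
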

The proof of Theorem \ref{thm:uniq} is in Section \ref{S:uniq}. Notice that  our theorem does not claim uniqueness for the $\nu=2$ case, which is attainable when $f$ is globally Lipschitz continuous.

In Section \ref{S:examples}, we will give many examples of superlineat forcing terms, $f$, that satisfy Assumption \ref{assum:Lip} for arbitrarily small $\e \in (0,1)$. If this is the case then the following corollary holds.
\begin{corollary}
  Assume Assumptions \ref{assum:f}, \ref{assum:noise},  \ref{assum:initial-data}, and \ref{assum:Lip}. If Assumption \ref{assum:Lip} holds for arbitrarily small $\e \in (0,1)$, then for any $\nu \in (0,2)$, there exists at most one weak solution to $u(t,x)$ to \eqref{eq:SPDE} for $t \in [0,T]$ that satisfies the growth condition
  \begin{equation}
    \Pro \left(\sup_{t \in [0,T]} \sup_{x \in \mathbb{R}^d} |u(t,x)|e^{-|x|^\nu}< +\infty \right)=1.
  \end{equation}
\end{corollary}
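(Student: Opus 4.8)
The plan is to derive this corollary directly from Theorem \ref{thm:uniq} by an elementary logical argument, exploiting the fact that the conclusion of Theorem \ref{thm:uniq} gets better (allows larger growth rates $\nu$) as the parameter $\e$ in Assumption \ref{assum:Lip} gets smaller. First I would fix an arbitrary $\nu \in (0,2)$ and observe that since $\nu < 2$, there exists $\e \in (0,1)$ small enough that $\nu < 2/(1+\e)$; concretely, any $\e$ with $0 < \e < \frac{2}{\nu} - 1$ works, and such $\e$ can be chosen in $(0,1)$ because $\frac{2}{\nu} - 1 > 0$. By the hypothesis that Assumption \ref{assum:Lip} holds for arbitrarily small $\e \in (0,1)$, it holds in particular for this chosen $\e$.

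Next I would apply Theorem \ref{thm:uniq} with this particular value of $\e$. Since all of Assumptions \ref{assum:f}, \ref{assum:noise}, \ref{assum:initial-data} are in force by hypothesis, and Assumption \ref{assum:Lip} holds for the chosen $\e$, and since $\nu \in (0, 2/(1+\e))$ by construction, Theorem \ref{thm:uniq} yields that there is at most one weak solution $u(t,x)$ to \eqref{eq:SPDE} on $[0,T]$ satisfying
\begin{equation*}
  \Pro \left(\sup_{t \in [0,T]} \sup_{x \in \mathbb{R}^d} |u(t,x)|e^{-|x|^\nu}< +\infty \right)=1.
\end{equation*}
Since $\nu \in (0,2)$ was arbitrary, this is exactly the statement of the corollary.

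There is essentially no obstacle here: the only thing to be careful about is the quantifier order, namely that for the fixed target $\nu$ one must \emph{first} select the appropriate $\e = \e(\nu)$ and only \emph{then} invoke Theorem \ref{thm:uniq} with that $\e$ — one cannot fix $\e$ first. The phrase ``Assumption \ref{assum:Lip} holds for arbitrarily small $\e \in (0,1)$'' is precisely what makes this selection possible, since it guarantees the local Lipschitz bound \eqref{eq:Lip-over-log} for a sequence of $\e$'s tending to $0$, hence in particular for one below the threshold $\frac{2}{\nu}-1$. No new analytic estimates, no new PDE or SPDE arguments, and no additional regularity considerations are needed; the corollary is a pure logical consequence of the theorem it follows. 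If one wished, one could also remark that the $\nu=2$ endpoint remains excluded for the same reason as in Theorem \ref{thm:uniq}: as $\e \downarrow 0$ the admissible range $(0,2/(1+\e))$ exhausts $(0,2)$ but never reaches $2$.
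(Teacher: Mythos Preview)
Your proposal is correct and is exactly the argument the paper intends: the corollary is stated without proof in the paper, as an immediate consequence of Theorem \ref{thm:uniq} obtained by choosing $\e<\frac{2}{\nu}-1$ and applying the theorem. There is nothing to add.
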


\section{Examples} \label{S:examples}
Before proving the main results, we demonstrate that there exist many examples of superliner forcing terms that satisfy Assumptions \ref{assum:f} and \ref{assum:Lip}. Furthermore, we show that there exist forcing terms that satisfy Osgood condition \eqref{eq:osgood} but violate \eqref{eq:growth-restriction}.

%\begin{example}
%As mentioned in the introduction, the main motivation for these superlinear functions is $f(x) = x \log(|x|)$. We can dominate this $f$ in the sense of \eqref{eq:dominate} with the positive
%$h(x) = (x+e)\log(x+e)$. We calculate directly that
%\[H(x):= \int_0^x \frac{1}{h(y)}dy = \log\log(e+x).\]
%Therefore $H(+\infty) = +\infty$ and $h$ satisfies the Osgood condition \eqref{eq:osgood}. Furthermore,  we can check that
%\begin{equation}
%  H^{-1}(H(x) + t) = \exp \left( \exp \left(\log\log(x + e) + t\right) \right) -e = (x+e)^{e^t} -e.
%\end{equation}
%Direct calculations show that \eqref{eq:growth-restriction} holds for any $\nu_t>0$.
%\end{example}

\begin{example}
Define
\[L_1(x) = \log(x),\]
\[L_{n+1}(x) = \log(L_n(x))\]
and
\[E_1(x) = \exp(x),\]
\[E_{n+1}(x) = \exp(E_n (x)).\]

For any $n \in \mathbb{N}$, define
\begin{equation} \label{eq:h_n}
  h_n(x) = (x + E_n(1))\prod_{k=1}^{n-1} L_k(x+E_n(1)).
\end{equation}
We add the $E_n(1)$ constants just to ensure that everything is well-posed and non-negative.

For example,
\begin{align}
  &h_2(x) = (x + E_2(1))\log(x+E_2(1)) \nonumber \\
  &h_3(x) = (x+ E_3(1)) \log (x + E_3(1)) \log \log (x+E_3(1)), \nonumber\\
  &h_4(x) = (x+ E_4(1)) \log (x + E_4(1)) \log \log (x+E_4(1)) \log \log \log (x + E_4(1)). \nonumber
\end{align}

Direct calculations confirm that
\[H_n(x) := \int_{0}^x \frac{1}{h_n(y)}dy = L_n(x + E_n(1)) - 1. \]
and
\begin{equation} \label{eq:vartheta}
  \vartheta_n(t,x) := H_n^{-1}(H_n(x) + t) = E_n(L_n(x + E_n(1)) + t) - E_n(1).
\end{equation}

To prove that the growth restriction \eqref{eq:growth-restriction} holds, we observe that for any fixed $t>0$ and $n \geq 2$,

\[\lim_{x \to \infty} \frac{L_n(x) + t}{L_{n-1}(x)} = \lim_{x \to \infty} \frac{\log(L_{n-1}(x)) + t}{L_{n-1}(x)} = 0.\]
In particular, this means that for any $t>0$  there exists $R_t>0$ such that
for all $x> R_t$,
\[L_n(x) + t\leq L_{n-1}(x)\]
and therefore,
\[E_n(L_n(x) + t) \leq E_n (L_{n-1}(x)) \leq \exp(x) \text{ for } x>R_t.\]
For any $\nu>0$,
\[\lim_{x \to \infty} \frac{\exp(x)}{\exp(\exp(x^\nu))} = 0.\]
So for any fixed $n\in \mathbb{N}$ and $t>0$, there exists $C_{t,\nu}>0$ such that
\[H_n^{-1}(H_n(x) + t) \leq C_{\nu,t}\exp(\exp(x^\nu)).\]

Therefore, each of $h_n$ defined in \eqref{eq:h_n} satisfies \eqref{eq:growth-restriction} with arbitrarily small $\nu_t>0$. So our theory allows for superlinear terms that grow much faster than $h_n$ for all $n$.

Furthermore,  we can check that for each $n$, the first derivative is bounded by
\begin{equation}
  \frac{d h_n}{dx}(x) \leq n \frac{h_n(x)}{x} \leq n \prod_{k=1}^{n-1} L_k(x + E_n(1)).
\end{equation}
Therefore, each $h_n$ satisfies Assumption \ref{assum:Lip} for arbitrarily small $\e>0$.

\end{example}

Unfortunately, the dynamic weighting method used to prove the main result of this paper cannot work for every function satisfying the Osgood condition $\int_0^\infty \frac{1}{h(x)} =\infty$ as the following example demonstrates because the Osgood condition does not imply \eqref{eq:growth-restriction}.
\begin{example}
  Let $x_n$ be a sequence of numbers such that
  \[x_1 = 1 \text{ and } x_{n+1} = \exp (\exp(\exp(x_n)))\]
  Let $H(x)$ be a positive, increasing, smooth, concave function such that
  \[H(x_n) = n \]
  %\text{ and } H'(x) >  0, H''(x) < 0, H'''(x) > 0 \]
  Let $h(x) = \frac{1}{H'(x)}$. By these definitions, when $t=1$
  \begin{equation}
   H^{-1}(H(x_n) + 1)= x_{n+1} = \exp(\exp(\exp(x_n))).
  \end{equation}
  This proves that $H^{-1}(H(x_n) + 1) >> \exp( \exp(|x_n|^{\nu})) $ for any $\nu\in (0,2)$. Therefore, \eqref{eq:growth-restriction} cannot hold.

  %In fact, this example shows that $x\mapsto H^{-1}(H(x) + t)$ can have arbitrary fast growth in $x$ if our only restriction is the Osgood condition \eqref{eq:osgood}.
\end{example}

\section{Stochastic convolution} \label{S:stoch-conv}
The stochastic convolution term of the mild solution \eqref{eq:mild} is defined to be
\begin{equation} \label{eq:stoch-conv}
  Z(t,x) = \sigma\int_0^t \int_{\mathbb{R}^d} G(t-s,x-y)W(dyds).
\end{equation}

Sanz-Sol\'e and Sarr\`a \cite{ss-2002} proved that the strong Dalang assumption (our Assumption \ref{assum:noise}) implies that $Z(t,x)$ is almost surely H\"older continuous in $t$ and $x$.

For any fixed $t>0$, $x \mapsto Z(t,x)$ is a stationary Gaussian process. By Theorems 3.1 and 3.2 of \cite{qw-1973} (see also Theorem 1.1 of \cite{k-2019} for the case of the fractional heat equation),
\begin{equation}
  \Pro \left(\sup_{x\in \mathbb{R}^d}  \frac{ |Z(t,x)| }{\sqrt{\log(e + |x|)}} <+\infty \right) = 1.
\end{equation}

Additionally, we can modify the results of \cite{qw-1973} to show that for any fixed time horizon $T>0$,
\begin{equation} \label{eq:stoch-conv-grow}
  \Pro \left(\sup_{t \in [0,T]} \sup_{x \in \mathbb{R}^d}\frac{|Z(t,x)|}{\sqrt{\log(e + |x|)}}<+\infty  \right) = 1
\end{equation}

These results inspire the definition of the dynamic weight $\rho(t,x)$ in the following section.

\section{Function spaces and weights} \label{S:weight}
Even though our existence result (Theorem \ref{thm:main}) claims that the solutions $u(t,x)$ to \eqref{eq:mild} are global and cannot blow up, these random fields are unbounded in $x$ almost surely for any $t>0$. We will prove that the solutions are global by introducing an appropriate weight $\rho(t,x)$ such that the quotient
\begin{equation}
   \frac{u(t,x)}{\rho(t,x)}
\end{equation}
has the property that for any fixed time horizon $T>0$,
\[\sup_{t \in [0,T]}\sup_{x \in \mathbb{R}^d} \left|\frac{u(t,x)}{\rho(t,x)}\right| < +\infty \text{ with probability one.}\]

Define the $C_0 = C_0(\mathbb{R}^d)$ to be the Banach space of continuous functions $\phi:\mathbb{R}^d \to \mathbb{R}$ such that
\begin{equation} \label{eq:C_0-def}
  \lim_{|x| \to +\infty} |\phi(x)|  = 0.
\end{equation}
$C_0(\mathbb{R}^d)$ is endowed with the supremum norm
\begin{equation} \label{eq:C_0-norm}
  |\phi|_{C_0} := \sup_{x \in \mathbb{R}^d} |\phi(x)|.
\end{equation}

Now we define the weights used in the proof of the main result.

\begin{lemma} \label{lem:h-alpha}
  For any $\alpha>1$ the function $h_\alpha: [0,+\infty) \to [0,+\infty)$ defined in \eqref{eq:h-alpha-assum} is strictly positive
  \begin{equation} \label{eq:h_alpha-bounded-below}
    \inf_{x>0} h_\alpha(x)>0
  \end{equation}
  and $h_\alpha$ satisfies the Osgood condition
  \begin{equation} \label{eq:h_alpha-osgood}
    \int_0^\infty \frac{1}{h_\alpha(x)}dx = +\infty.
  \end{equation}
  %\textcolor{red}{and $h_\alpha: [0,+\infty) \to [0,+\infty) $ is convex.}
\end{lemma}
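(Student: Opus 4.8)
The plan is to derive both properties directly from the definition $h_\alpha(x) = h(x^\alpha)/x^{\alpha-1}$ together with the hypotheses on $h$ collected in Assumption \ref{assum:f}: that $h$ is strictly positive, increasing, strictly convex, and satisfies the Osgood condition $\int_0^\infty 1/h(y)\,dy = +\infty$.

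For the lower bound \eqref{eq:h_alpha-bounded-below}, I would split the analysis at $x=1$. On $(0,1]$ we have $x^{\alpha-1}\le 1$, so $h_\alpha(x) = h(x^\alpha)/x^{\alpha-1} \ge h(x^\alpha) \ge h(0) > 0$ using monotonicity and strict positivity of $h$; hence $\inf_{0<x\le1} h_\alpha(x) \ge h(0)>0$. On $[1,\infty)$, since $h$ is increasing and $x^\alpha \ge x$, we get $h(x^\alpha) \ge h(x)$. Because $h$ is strictly convex and increasing, it grows at least linearly: pick any $x_0\ge 1$ and note $h(x) \ge h(x_0) + h'(x_0)(x-x_0)$ for $x\ge x_0$, and one can arrange $h'(x_0)>0$ by strict convexity (the derivative is strictly increasing and $h$ is increasing, so $h'$ is eventually positive — actually $h$ increasing already forces $h'\ge 0$, and strict convexity forbids $h'\equiv 0$ on an interval). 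A cleaner route: for $x\ge 1$, $h_\alpha(x)\ge h(x)/x^{\alpha-1}$, and since strict convexity of $h$ gives $h(x)/x \to \ell \in (0,+\infty]$; if this limit is positive (which it is, by convexity and $h$ increasing nonconstant) then combined with the boundedness-below of $h$ near $x=1$ one concludes $\inf_{x\ge1} h_\alpha(x)>0$. Taking the minimum of the two infima gives \eqref{eq:h_alpha-bounded-below}.

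For the Osgood condition \eqref{eq:h_alpha-osgood}, the key computation is the substitution $y = x^\alpha$, $dy = \alpha x^{\alpha-1}\,dx$, which transforms $\int_1^\infty \frac{dx}{h_\alpha(x)} = \int_1^\infty \frac{x^{\alpha-1}}{h(x^\alpha)}\,dx = \frac{1}{\alpha}\int_1^\infty \frac{dy}{h(y)}$. The right-hand side diverges by the Osgood hypothesis on $h$, so $\int_1^\infty 1/h_\alpha = +\infty$, and adding the finite contribution of $\int_0^1 1/h_\alpha$ (finite because $h_\alpha$ is bounded below by a positive constant on $(0,1]$) yields \eqref{eq:h_alpha-osgood}. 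This change of variables is the heart of why the dynamic-weighting device works: inflating the argument from $x$ to $x^\alpha$ is exactly compensated by the $x^{-(\alpha-1)}$ prefactor at the level of the integral $\int 1/h$, so the Osgood property survives even though $h_\alpha$ grows asymptotically faster than $h$.

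I do not expect a serious obstacle here; the only point requiring a little care is making rigorous the claim "$h$ strictly convex and increasing implies $h(x)/x$ is bounded below by a positive constant for large $x$" used in the lower-bound argument — one must rule out the degenerate possibility that $h$ increases but sublinearly. Strict convexity gives that $h'$ is strictly increasing; since $h$ is increasing, $h'\ge 0$, and strict monotonicity of $h'$ forces $h'(x) \ge h'(x_0) > 0$ for all $x>x_0$ once $h'(x_0)>0$, which must happen for some $x_0$ (otherwise $h'\equiv 0$ and $h$ is constant, contradicting that $h$ is increasing and strictly convex). Then $h(x)\ge h(x_0)+h'(x_0)(x-x_0)$ is genuinely linear in $x$, and dividing by $x^{\alpha-1}$ — wait, for $\alpha>1$ this tends to $0$, so one instead uses $h_\alpha(x)\ge h(x^\alpha)/x^{\alpha-1}$ with $h(x^\alpha)$ at least of order $x^\alpha$, giving $h_\alpha(x)\gtrsim x \to \infty$; thus $h_\alpha$ is not merely bounded below but in fact blows up at infinity, and the infimum over all $x>0$ is attained on a compact set where $h_\alpha$ is continuous and strictly positive, settling \eqref{eq:h_alpha-bounded-below} cleanly.
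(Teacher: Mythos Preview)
Your proposal is correct and follows essentially the same route as the paper. The Osgood part is identical: the paper writes the substitution as the identity $H_\alpha(x)=\frac{1}{\alpha}H(x^\alpha)$, which is exactly your change of variables $y=x^\alpha$. For the lower bound \eqref{eq:h_alpha-bounded-below} the paper dispatches it in one sentence (``follows from the fact that $h$ is positive and continuous''), whereas you spell out the behavior near $0$ and near $\infty$ using convexity to get linear growth of $h$; your added detail is justified, since positivity and continuity of $h$ alone do not prevent $h_\alpha(x)\to 0$ as $x\to\infty$ without some growth input.
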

\begin{proof}
The positivity of $h_\alpha$ follows from the fact that $h$ is positive and continuous.

If we let
\begin{equation} \label{eq:H-alpha}
  H_\alpha(x): = \int_{0}^x \frac{1}{h_\alpha(y)}dy = \int_0^x \frac{y^{\alpha-1}}{h(y^\alpha)}dy,
\end{equation}
then direct calculations show that
\begin{equation}  \label{eq:H-alpha-explicit}
  H_\alpha(x) = \frac{1}{\alpha}H(x^\alpha),
\end{equation}
which diverges as $x \to +\infty$ because of \eqref{eq:osgood}.

\end{proof}
%
%This means that the solution to the ODE
%\[\frac{\partial  \vartheta_\alpha}{\partial t} (t) = h_\alpha(\vartheta_\alpha(t))\]
%is
%\begin{equation} \label{eq:H-alpha-ODE}
%  \vartheta_\alpha(t,x) = H_\alpha^{-1}( H_\alpha(x) + t) = \left( H^{-1}(H( x^\alpha) + \alpha t) \right)^{\frac{1}{\alpha}}.
%\end{equation}
%

An important consequence of the convexity of $h$ is that the value of $h$ of a product of positive numbers can be bounded by $h_\alpha$ and $h_{\frac{\alpha}{\alpha-1}}$ as the following lemma demonstrates.
\begin{lemma} \label{lem:convexity}
  Let $h:[0,+\infty) \to [0,+\infty)$ be a convex function and let $\alpha>1$. For any numbers $\rho>0$ and $q>0$,
  \begin{equation} \label{eq:convex-ineq}
    h(\rho q) \leq q h_\alpha(\rho) + \rho h_{\frac{\alpha}{\alpha-1}}(q)
  \end{equation}
\end{lemma}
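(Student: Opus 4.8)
The plan is to exploit Young's inequality together with the convexity of $h$. Fix $\rho>0$ and $q>0$ and set $\beta = \alpha/(\alpha-1)$, the conjugate exponent of $\alpha$, so that $\tfrac{1}{\alpha} + \tfrac{1}{\beta} = 1$. By Young's inequality, $\rho q \leq \tfrac{\rho^\alpha}{\alpha} + \tfrac{q^\beta}{\beta}$. Since $h$ is increasing (as a convex function on $[0,\infty)$ with $h\ge 0$ — or we may simply use that it is convex and argue via monotonicity of the relevant quantities), it would suffice to control $h\bigl(\tfrac{\rho^\alpha}{\alpha} + \tfrac{q^\beta}{\beta}\bigr)$. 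Here convexity of $h$ enters directly: writing the argument as the convex combination $\tfrac{1}{\alpha}\rho^\alpha + \tfrac{1}{\beta}q^\beta$ of the two points $\rho^\alpha$ and $q^\beta$, convexity gives
\begin{equation}
  h\!\left(\frac{1}{\alpha}\rho^\alpha + \frac{1}{\beta}q^\beta\right) \leq \frac{1}{\alpha}h(\rho^\alpha) + \frac{1}{\beta}h(q^\beta).
\end{equation}

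Next I would rewrite each term on the right in terms of $h_\alpha$ and $h_\beta$. By the definition \eqref{eq:h-alpha-assum}, $h(\rho^\alpha) = \rho^{\alpha-1} h_\alpha(\rho)$ and $h(q^\beta) = q^{\beta-1} h_\beta(q)$. Substituting, and noting $\tfrac{1}{\alpha}\rho^{\alpha-1}$ and $\tfrac{1}{\beta}q^{\beta-1}$ still contain powers of $\rho$ and $q$, I would need the extra factors to collapse to the clean $q$ and $\rho$ appearing in \eqref{eq:convex-ineq}. This is where a homogeneity/rescaling trick is the right move rather than a direct substitution: apply the convexity estimate above not to $(\rho,q)$ but to the rescaled pair $(\rho/\lambda,\, q\lambda)$ for a free parameter $\lambda>0$, whose product is still $\rho q$, and then optimize over $\lambda$ (or choose $\lambda$ so that the two resulting terms balance the powers correctly). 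Concretely, replacing $\rho \mapsto \rho/\lambda$, $q\mapsto q\lambda$ turns the bound into $h(\rho q) \le \tfrac{1}{\alpha}(\rho/\lambda)^{\alpha-1}h_\alpha(\rho/\lambda) + \tfrac{1}{\beta}(q\lambda)^{\beta-1}h_\beta(q\lambda)$; a judicious choice of $\lambda$ (of the form $\lambda = c\, \rho^{a} q^{b}$) should make the prefactors exactly $q$ and $\rho$ respectively, at which point $h_\alpha$ and $h_\beta$ are evaluated at single-variable expressions that one then bounds by $h_\alpha(\rho)$ and $h_\beta(q)$ using monotonicity (after checking the arguments lie in the regime $\ge 1$ where \eqref{eq:h-alpha-assum} and its convexity are assumed, or after absorbing constants).

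I expect the main obstacle to be precisely this bookkeeping of powers: getting the inequality into the \emph{exact} form $h(\rho q)\le q\,h_\alpha(\rho) + \rho\, h_{\alpha/(\alpha-1)}(q)$ with no stray constants or powers, rather than merely an inequality of the same shape up to constants. The cleanest route is likely to avoid Young entirely and instead use homogeneity directly: observe that $h_\alpha(\rho) = h(\rho^\alpha)/\rho^{\alpha-1}$ means $\rho^{\alpha-1} = h(\rho^\alpha)/h_\alpha(\rho)$, and similarly for $q$; then write $\rho q = (\rho^\alpha)^{1/\alpha}(q^\beta)^{1/\beta}$ and apply the two-point convexity bound $h(a^{1/\alpha}b^{1/\beta}) \le h\bigl(\tfrac{a}{\alpha}+\tfrac{b}{\beta}\bigr)\le \tfrac{1}{\alpha}h(a)+\tfrac{1}{\beta}h(b)$ with $a=\rho^\alpha t$, $b=q^\beta/t^{\beta/\alpha}$ for suitable $t$ chosen to make $a^{1/\alpha}b^{1/\beta}=\rho q$ while forcing the coefficients to come out as $q$ and $\rho$. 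One should also double-check the degenerate cases $\rho\le 1$ or $q\le 1$, since the convexity of $h_\alpha$ in \eqref{eq:h-alpha-assum} is only assumed for arguments $\ge 1$; but the inequality \eqref{eq:convex-ineq} itself only uses convexity of $h$, so those cases should go through by the same computation.
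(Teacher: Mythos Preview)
Your proposal identifies the right ingredients---convexity of $h$ and the two reference points $\rho^\alpha$ and $q^{\alpha/(\alpha-1)}$---but neither of your two strategies closes to the stated inequality, and you yourself flag the difficulty. The Young-plus-rescaling idea does not work: after replacing $(\rho,q)$ by $(\rho/\lambda,q\lambda)$ and applying convexity with the fixed weights $1/\alpha,\,1/\beta$, the term involving $h_\alpha$ is $\tfrac{1}{\alpha}(\rho/\lambda)^{\alpha-1}h_\alpha(\rho/\lambda)$, so forcing the coefficient to equal $q$ requires $(\rho/\lambda)^{\alpha-1}=\alpha q$, but then $h_\alpha$ is evaluated at $(\alpha q)^{1/(\alpha-1)}$ rather than at $\rho$, and there is no monotonicity relation that lets you replace it by $h_\alpha(\rho)$. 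No single $\lambda$ can simultaneously fix both coefficients \emph{and} both arguments, so the optimization never lands on the exact form \eqref{eq:convex-ineq}. Your second route has the same defect. (A side remark: convex and nonnegative on $[0,\infty)$ does not force $h$ to be increasing---think of $(x-1)^2$---and the lemma as stated does not assume it; the paper's argument avoids monotonicity of $h$ entirely.)

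The missing idea is simple once seen: do not use the fixed Young weights $1/\alpha,\,1/\beta$ at all. The conditions $\rho^\alpha > q\rho$ and $q\rho > q^{\alpha/(\alpha-1)}$ are both equivalent to $\rho^{\alpha-1}>q$, so $q\rho$ always lies between $\rho^\alpha$ and $q^{\alpha/(\alpha-1)}$. Hence one may write $q\rho=\theta\,\rho^\alpha+(1-\theta)\,q^{\alpha/(\alpha-1)}$ with the specific $\theta\in[0,1]$ determined by these three numbers, and convexity gives $h(q\rho)\le \theta\,h(\rho^\alpha)+(1-\theta)\,h(q^{\alpha/(\alpha-1)})$. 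The final step is the elementary bound $\theta\le q/\rho^{\alpha-1}$ and $1-\theta\le \rho/q^{1/(\alpha-1)}$, which reduces to the one-variable facts $(1-x)/(1-x^\alpha)\le 1$ and $(x-1)/(x^{\alpha/(\alpha-1)}-1)\le 1$ for $x\ge 0$. The key conceptual point you are missing is that the target coefficients $q/\rho^{\alpha-1}$ and $\rho/q^{1/(\alpha-1)}$ are \emph{not} themselves a convex pair---their sum can exceed $1$---so the correct move is to bound the actual convex weights $\theta,\,1-\theta$ from above by them, rather than to try to hit those coefficients exactly via some rescaling.
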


\begin{proof}
  First we observe that for any real numbers $q>0$, $\rho>0$,
  \begin{align}
    q \rho
    = \left(\frac{q\rho - q^{\frac{\alpha}{\alpha-1}}}{ \rho^\alpha - q^{\frac{\alpha}{\alpha-1}}} \right)\rho^\alpha + \left(\frac{ \rho^\alpha - q \rho}{\rho^\alpha - q^{\frac{\alpha}{\alpha-1}} }\right) q^{\frac{\alpha}{\alpha-1}}.
  \end{align}
  %\[v_N(t,x_t)+ M\rho(0,x_t) = q_N(t,x_t) \rho(t,x_t) = \frac{\rho(t,x_t)}{q_N(t,x_t) + \rho(t,x_t)} (q_N(t,x_t))^2 + \frac{q_N(t,x_t)}{q_N(t,x_t) + \rho(t,x_t)} (\rho(t,x_t))^2,  \]
  It follows from the convexity of $h$ that
  \begin{equation} \label{eq:convex-bound}
    h(q \rho) \leq  \left(\frac{q\rho - q^{\frac{\alpha}{\alpha-1}}}{ \rho^\alpha - q^{\frac{\alpha}{\alpha-1}}}\right) h(\rho^\alpha) + \left(\frac{ \rho^\alpha - q \rho}{\rho^\alpha - q^{\frac{\alpha}{\alpha-1}} } \right) h(q^{\frac{\alpha}{\alpha-1}}).
  \end{equation}
  %\begin{equation}
%    \frac{|h(v_N(t,x_t) + M\rho(0,x_t))|}{\rho(t,x_t)} \leq  \frac{q_N(t,x_t)}{(q_N(t,x_t) + \rho(t,x_t))\rho(t,x_t)}|h(\rho^2)| + \frac{1}{q_N(t,x_t) + \rho(t,x_t)} h((q_N(t,x_t))^2).
%  \end{equation}
  The prefactors can be bounded
  \begin{align} \label{eq:prefactor-bound-1}
    &\frac{q\rho - q^{\frac{\alpha}{\alpha-1}}}{ \rho^\alpha - q^{\frac{\alpha}{\alpha-1}}} %\nonumber\\
    =  \frac{q}{\rho^{\alpha -1}} \left(\frac{1 - \left(\frac{q^{\frac{1}{\alpha-1}}}{\rho} \right)}{1 - \left(\frac{q^{\frac{1}{\alpha-1}}}{\rho} \right)^\alpha} \right)%\nonumber\\
    \leq \frac{q}{\rho^{\alpha -1}}
  \end{align}
  and
  \begin{align} \label{eq:prefactor-bound-2}
    &\frac{ \rho^\alpha - q \rho}{\rho^\alpha - q^{\frac{\alpha}{\alpha-1}} } %\nonumber\\
    = \frac{\rho}{q^{\frac{1}{\alpha -1}}} \left(\frac{\left(\frac{\rho^{\alpha-1}}{q} \right) -1 }{\left(\frac{\rho^{\alpha-1}}{q} \right)^{\frac{\alpha}{\alpha-1}} -1} \right)
    \leq  \frac{\rho}{q^{\frac{1}{\alpha -1}}}
  \end{align}
  because the positive real-valued functions $\frac{1 - x}{1-x^\alpha} \in [0,1]$ and  $ \frac{x -1}{x^{\frac{\alpha}{\alpha-1}} -1} \in [0,1] $ for all $x \geq 0$.

  Combining \eqref{eq:convex-bound}, \eqref{eq:prefactor-bound-1}, and \eqref{eq:prefactor-bound-2},
  \begin{equation}
    h(q \rho) \leq \frac{q h(\rho^\alpha)}{\rho^{\alpha-1}} + \frac{\rho h(q^{\frac{\alpha}{\alpha-1}})}{q^{\frac{1}{\alpha - 1}}} = q h_\alpha(\rho) + \rho h_{\frac{\alpha}{\alpha-1}}(q).
  \end{equation}
\end{proof}

Now we can define our dynamic weight function. Let $\rho_0: \mathbb{R}^d \to [0,+\infty)$ be defined by
\begin{equation} \label{eq:rho-0}
  \rho_0(x) = \sqrt{\log(e + |x|^2)}.
\end{equation}
This is a twice-differentiable weight that grows at the same rate as the stochastic convolution (see  \eqref{eq:stoch-conv-grow}).
\begin{definition} \label{def:rho}
Define the dynamic weight function $\rho: [0,T]\times \mathbb{R}^d \to [0,+\infty)$ by
\begin{equation} \label{eq:weight-def}
  \rho(t,x) = \E \left( H_\alpha^{-1} \left( H_\alpha( \rho_0(x + B(t))) + t \right)\right).
\end{equation}
where $B(t)$ is a standard $d$-dimensional Wiener process, $\rho_0$ is defined in \eqref{eq:rho-0}, and $H_\alpha$ is defined by \eqref{eq:H-alpha}.
\end{definition}

\begin{proposition} \label{prop:super-solution}
  If $\alpha \in \left(1, 2/\nu_{2T} \right)$, then the weight $\rho(t,x)$ defined in \eqref{eq:weight-def} is finite and differentiable for any $t \in [0,T]$ and $x \in \mathbb{R}^d$ and satisfies
  \begin{equation}
   \begin{cases}
    \displaystyle {\frac{\partial \rho}{\partial t}(t,x) \geq \frac{1}{2}\Delta \rho(t,x) + h_\alpha(\rho(t,x)),}\\
    \rho(0,x)= \rho_0(x) = \sqrt{\log \left(e + |x|^2\right) }.
   \end{cases}
  \end{equation}
\end{proposition}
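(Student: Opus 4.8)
The plan is to verify the differential inequality pointwise by differentiating the expectation in \eqref{eq:weight-def} and using the heat-kernel structure built into the definition. Write $\vartheta_\alpha(t,z) := H_\alpha^{-1}(H_\alpha(z) + t)$, so that $\vartheta_\alpha$ solves the ODE $\partial_t \vartheta_\alpha = h_\alpha(\vartheta_\alpha)$, $\vartheta_\alpha(0,z) = z$, and $\partial_z \vartheta_\alpha = h_\alpha(\vartheta_\alpha)/h_\alpha(z) > 0$. Then
\[
  \rho(t,x) = \E\bigl(\vartheta_\alpha(t, \rho_0(x+B(t)))\bigr) = \int_{\R^d} G(t, x-y)\, \vartheta_\alpha(t, \rho_0(y))\, dy.
\]
First I would establish finiteness and differentiability: using $\rho_0(y) \le C(1 + \log(1+|y|))$, the growth restriction \eqref{eq:growth-restriction} applied to $H_\alpha$ (note $H_\alpha(x) = \frac1\alpha H(x^\alpha)$, so $H_\alpha^{-1}(H_\alpha(x)+t)$ is controlled by $\left(H^{-1}(H(x^\alpha) + \alpha t)\right)^{1/\alpha} \le C_t \exp(\exp(|x^\alpha|^{\nu_{2T}}))^{1/\alpha}$ for the relevant time range), and the polynomial-in-$\log$ growth of $\rho_0$, one sees $\vartheta_\alpha(t,\rho_0(y))$ grows like $\exp(\exp((C\log|y|)^{\nu}))$ for some $\nu < 2$ after the condition $\alpha < 2/\nu_{2T}$ is used to keep $\alpha \nu_{2T}/$(something) below $2$ — this is sub-Gaussian in $|y|$, hence integrable against $G(t,x-y)$, and one gets enough uniform integrability on compact $(t,x)$-sets to differentiate under the integral/expectation sign. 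This finiteness bookkeeping, and pinning down exactly why $\alpha \in (1, 2/\nu_{2T})$ is the right window (the factor $2T$ rather than $T$ presumably enters because one needs the bound to survive a time-doubling in the heat semigroup estimate), is where I expect the main technical friction.

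Next, the differential inequality. Since $\rho(t,x) = (G(t)\ast \vartheta_\alpha(t,\rho_0))(x)$, the heat semigroup identity gives
\[
  \frac{\partial \rho}{\partial t}(t,x) = \tfrac12 \Delta \rho(t,x) + \E\bigl(\partial_t\vartheta_\alpha(t,\rho_0(x+B(t)))\bigr)
  = \tfrac12 \Delta\rho(t,x) + \E\bigl(h_\alpha(\vartheta_\alpha(t,\rho_0(x+B(t))))\bigr),
\]
using $\partial_t \vartheta_\alpha = h_\alpha(\vartheta_\alpha)$. So it remains to show
\[
  \E\bigl(h_\alpha(\vartheta_\alpha(t,\rho_0(x+B(t))))\bigr) \ge h_\alpha\bigl(\E(\vartheta_\alpha(t,\rho_0(x+B(t))))\bigr) = h_\alpha(\rho(t,x)).
\]
This is exactly Jensen's inequality, and it is where Assumption \ref{assum:f}(4) — the convexity of $h_\alpha$ on $[1,\infty)$ — is used. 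One needs $\vartheta_\alpha(t,\rho_0(\cdot)) \ge 1$ almost surely, which holds because $\rho_0 \ge 1$ and $\vartheta_\alpha(t,\cdot)$ is increasing with $\vartheta_\alpha(t,z) \ge z$ (as $h_\alpha > 0$); combined with $h_\alpha$ convex on $[1,\infty)$, Jensen applies directly. The initial condition $\rho(0,x) = \rho_0(x)$ is immediate from $\vartheta_\alpha(0,z) = z$.

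A couple of points to handle carefully in the write-up: (i) justifying the interchange of $\Delta$ (in $x$) with the expectation and the interchange of $\partial_t$ with the expectation requires the domination bounds from the finiteness step, plus the smoothness of $\vartheta_\alpha$ (which follows from $h$ being twice differentiable and strictly positive, so $H_\alpha$ and $H_\alpha^{-1}$ are $C^2$); (ii) the term $\partial_t$ acting on $\vartheta_\alpha$ has two contributions conceptually — the explicit $t$ in $\vartheta_\alpha(t,\cdot)$ and the $t$ in $B(t)$ — but by writing everything as the convolution $G(t)\ast g_t$ with $g_t = \vartheta_\alpha(t,\rho_0)$ the $B(t)$-dependence is absorbed into the semigroup and the product rule cleanly separates the $\frac12\Delta$ term from the $\partial_t g_t$ term. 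I would structure the proof as: Step 1, finiteness and $C^{1,2}$ regularity via the growth estimates and the constraint on $\alpha$; Step 2, the semigroup/product-rule computation of $\partial_t \rho - \frac12\Delta\rho$; Step 3, Jensen's inequality using convexity of $h_\alpha$ and $\vartheta_\alpha(t,\rho_0(\cdot)) \ge 1$. The main obstacle is Step 1 — specifically extracting from \eqref{eq:growth-restriction} a sub-Gaussian (in the spatial variable) bound on $\vartheta_\alpha(t,\rho_0(y))$ uniform for $t \le T$, which is precisely what forces the range $\alpha \in (1, 2/\nu_{2T})$.
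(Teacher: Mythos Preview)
Your proposal is correct and follows essentially the same route as the paper: finiteness via the growth restriction \eqref{eq:growth-restriction} and the identity $H_\alpha^{-1}(H_\alpha(x)+t) = \bigl(H^{-1}(H(x^\alpha)+\alpha t)\bigr)^{1/\alpha}$, the identity $\partial_t\rho = \tfrac12\Delta\rho + \E\bigl[h_\alpha(\vartheta_\alpha(t,\rho_0(x+B(t))))\bigr]$ (the paper phrases this as an application of It\^o's formula rather than the semigroup product rule, but the computation is the same), and then Jensen's inequality using the convexity of $h_\alpha$. One small correction: the reason $2T$ appears is not a time-doubling in the heat semigroup but the factor $\alpha$ in the identity above, which turns $t\le T$ into $\alpha t\le \alpha T$; taking $\alpha<2$ one may safely invoke \eqref{eq:growth-restriction} at time $2T$, and then $\alpha\nu_{2T}<2$ is exactly what makes $\exp\bigl(\tfrac{1}{\alpha}\exp((\log(e+|y|^2))^{\alpha\nu_{2T}/2})\bigr)$ sub-Gaussian in $y$.
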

\begin{proof}
  The proof is a straightforward application of Ito formula and Jensen's inequality.

  From \eqref{eq:H-alpha-explicit},
  \begin{equation}
    H_\alpha^{-1}(H_\alpha(\rho_0(x)) + t) = \left(H^{-1} \left(H((\rho_0(x))^\alpha \right) + \alpha t \right)^{\frac{1}{\alpha}}.
  \end{equation}
  By assumption  \eqref{eq:growth-restriction} and definition \eqref{eq:rho-0} there exists some $C_T>0$ such that for all $t \in [0,T]$,
  \begin{equation}
    H_\alpha^{-1}(H_\alpha(\rho_0(x)) + t) \leq C_{T} \exp\left(\frac{1}{\alpha} \exp \left( \left(\log(e + |x|^2) \right)^{\alpha (\nu_{\alpha T})/2} \right) \right).
  \end{equation}
  Because $\alpha$ was chosen so that $\frac{\alpha \nu_{\alpha T}}{2} <1$,
  \begin{equation} \label{eq:H-exp}
    H_\alpha^{-1}(H_\alpha(\rho_0(x)) + t) \leq C_{T,\alpha} e^{|x|}.
  \end{equation}
  Therefore,
  \begin{equation} \label{eq:rho-exp}
    \rho(t,x) = \E H_\alpha^{-1}(H_\alpha(\rho_0(x + B(t))) + t) \leq C_{T,\alpha} \E e^{|x|}e^{|B(t)|} \leq C_{T,\alpha}e^{|x| + t},
  \end{equation}
  proving that $\rho(t,x)$ is well defined.

  Also, from the definition of $H_\alpha$ \eqref{eq:H-alpha}, it is clear that
  \begin{equation} \label{eq:partial-deriv-H-alpha}
    \left(H_\alpha^{-1} \right)'(t) = \frac{d }{dt} H_\alpha^{-1}(t) = h_\alpha(H_\alpha^{-1}(t)).
  \end{equation}

  By Ito formula,
  \begin{align}
    &\frac{\partial \rho}{\partial t}(t,x) \nonumber\\
    &= \E \left(\left(H_\alpha^{-1} \right)' \left(H_\alpha(\rho_0(x + B(t))) + t \right)\right) + \frac{1}{2}\Delta \rho(t,x)\nonumber \\
    &= \E  h_\alpha \left( H_\alpha^{-1}(H_\alpha(\rho_0(x + B(t))) + t) \right) + \frac{1}{2}\Delta \rho(t,x).
  \end{align}
  Notice that the use of Ito formula is justified and the above terms are integrable because of the polynomial growth assumption \eqref{eq:h-polynomial-growth}, and the exponential growth \eqref{eq:H-exp}.
  Finally, by Jensen's inequality and the assumption that $h_\alpha(x) $  is convex \eqref{eq:h-alpha-assum},
  \begin{equation} \label{eq:weight-deriv}
    \frac{\partial \rho}{\partial t}(t,x) \geq \frac{1}{2}\Delta \rho(t,x) + h_\alpha(\rho(t,x)).
  \end{equation}
\end{proof}

Lastly, we show that, without loss of generality, for any $t>0$, $\rho(t,x)$ dominates $\rho_0(x)$ as $|x| \to + \infty$.
\begin{lemma} \label{lem:rho-dominates}
  Without loss of generality, we can choose $h$ satisfying Assumption 1 such that for any $t>0$,
  \begin{equation}
    \liminf_{|x| \to \infty} \frac{\rho(t,x)}{\rho_0(x)} =+\infty.
  \end{equation}
\end{lemma}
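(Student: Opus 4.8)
The plan is to argue that replacing $h$ by a slightly larger function $\tilde h$ (still satisfying Assumption \ref{assum:f}) can only make the weight $\rho$ larger, and that one can arrange $\tilde h$ so that the associated ODE flow $\vartheta$ separates from the identity at an arbitrarily slow but strictly positive rate in $x$. First I would record the monotonicity: if $\tilde h \ge h$ pointwise, then $H_\alpha^{-1}(\cdot)$ built from $\tilde h$ dominates the one built from $h$ (because $\tilde H_\alpha \le H_\alpha$ and both are increasing), hence $\tilde\rho(t,x) \ge \rho(t,x)$ for all $(t,x)$; thus it suffices to exhibit \emph{one} admissible $h$ for which the $\liminf$ is $+\infty$. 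Because $\rho(t,x) = \E\, H_\alpha^{-1}(H_\alpha(\rho_0(x+B(t))) + t)$ and $\rho_0$ is comparable to $\rho_0(x)$ with overwhelming probability as $|x|\to\infty$ (the Gaussian fluctuation $B(t)$ is of constant order while $\rho_0(x)\to\infty$), Jensen or a direct lower bound restricting the expectation to the event $\{|B(t)|\le 1\}$ reduces the claim to showing
\begin{equation*}
  \liminf_{|x|\to\infty} \frac{H_\alpha^{-1}\!\left(H_\alpha(\rho_0(x)) + t\right)}{\rho_0(x)} = +\infty
  \quad\text{for every } t>0,
\end{equation*}
and by the substitution $r = \rho_0(x) \to \infty$ this is equivalent to $\liminf_{r\to\infty} H_\alpha^{-1}(H_\alpha(r)+t)/r = +\infty$, i.e. the ODE flow for $h_\alpha$ run for time $t$ eventually exceeds any fixed multiple of the initial datum.

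Next I would translate this into a condition on $h_\alpha$ itself. Writing $\psi_t(r) = H_\alpha^{-1}(H_\alpha(r)+t)$, one has $\log \psi_t(r) - \log r = \int_r^{\psi_t(r)} \frac{ds}{s} \ge \int_r^{\psi_t(r)} \frac{h_\alpha(r)}{s\, h_\alpha(s)}\,\frac{ds}{\,} $—more simply, since $\psi_t(r)\ge r$ and $\psi_t'(r) = h_\alpha(\psi_t(r))/h_\alpha(r)$, the flow spreads apart precisely when $h_\alpha(s)/s \to \infty$, i.e. when $h_\alpha$ (hence $h$) is genuinely superlinear. If $h$ is superlinear then for any $M$ there is $r_M$ with $h_\alpha(s)\ge (2M)\,s/t$ for $s\ge r_M$, which forces $\psi_t(r)\ge Mr$ once $r\ge r_M$ (a Grönwall-type comparison with the linear ODE $\dot y = (2M/t) y$, whose time-$t$ value is $e^2 r \ge Mr$ for the borderline, adjusted constants aside). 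Conversely, if $h$ happened \emph{not} to be superlinear along a subsequence, I would simply replace $h$ by $\tilde h(x) = \max(h(x), x\,\ell(x))$ where $\ell$ is a smooth, increasing, slowly-growing function (e.g. $\ell(x) = \log\log(e^e+x)$) chosen small enough to preserve the Osgood property \eqref{eq:osgood}, the polynomial bound \eqref{eq:h-polynomial-growth}, the growth restriction \eqref{eq:growth-restriction}, and the convexity of $\tilde h$ and $\tilde h_\alpha$; verifying that these four properties survive the modification is routine because $\ell$ grows slower than every iterated logarithm and $\int^\infty \frac{dx}{x\log\log x} = +\infty$.

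The main obstacle I expect is the bookkeeping in the last step: one must choose the correction $\ell$ so that \emph{all} parts of Assumption \ref{assum:f} are simultaneously retained — in particular \eqref{eq:growth-restriction}, which is the delicate one, since the new ODE flow $\tilde\vartheta(t,x) = \tilde H^{-1}(\tilde H(x)+t)$ must still be dominated by $\exp(\exp(|x|^{\nu_t}))$. This is manageable because making $h$ larger makes $H$ smaller and $H^{-1}$ larger, so one needs an \emph{upper} bound on the enlarged flow; but a slowly growing multiplicative correction $x\ell(x)$ to $h$ changes $H$ only by a factor comparable to $1/\ell$ near infinity, hence changes $\tilde\vartheta$ sub-exponentially, which is swallowed by the double-exponential slack in \eqref{eq:growth-restriction}. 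A secondary technical point is justifying the reduction from the expectation over $B(t)$ to the deterministic quantity $\psi_t(\rho_0(x))$: I would handle this by bounding $\rho(t,x)$ below via $\E[\,\cdot\,;\{|B(t)|\le 1\}] \ge \Pro(|B(t)|\le 1)\cdot \inf_{|z|\le 1}\psi_t(\rho_0(x+z))$ and using continuity and monotonicity of $\rho_0$ and $\psi_t$ to see $\inf_{|z|\le 1}\rho_0(x+z)/\rho_0(x)\to 1$ as $|x|\to\infty$.
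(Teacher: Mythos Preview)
Your proposal is correct and follows the same strategy as the paper: modify $h$ so that it is genuinely superlinear, deduce that the ODE flow satisfies $H_\alpha^{-1}(H_\alpha(r)+t)/r\to\infty$ as $r\to\infty$, and then pass to the expectation defining $\rho$. The paper makes two simpler choices than you do: it adds $|y|\log|y|$ to $h$ outright (giving the explicit lower bound $H^{-1}(H(y)+t)\ge y^{e^{ct}}$, so the ratio divergence is immediate and no convexity issue arises from taking a $\max$), and it handles the Brownian shift via Fatou's lemma applied to the almost-sure limit $\rho_0(x+B(t))/\rho_0(x)\to 1$ rather than by restricting to the event $\{|B(t)|\le 1\}$; your monotonicity preamble is correct but not needed for the statement.
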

\begin{proof}
  For any fixed $t>0$,
  \begin{equation} \label{eq:adding-W-neglig}
    \lim_{|x| \to \infty} \frac{\rho_0(x + B(t))}{\rho_0(x)} = 1 \text{ with probability one}.
  \end{equation}
  Without loss of generality, we can choose $h$ in such a way that $h(y) \geq c |y| \log(|y|)$ for some $c>0$ (we can simply add $|y|\log|y|$ to the original $h$). Consequently, for $y>0$,
  \begin{equation}
    H^{-1}(H(y) + t) \geq y^{e^{ct}}.
  \end{equation}
  and for any $y>0$,
  \begin{equation}
    \lim_{y \to +\infty} \frac{H_\alpha^{-1}(H_\alpha(y) + t)}{y} = +\infty.
  \end{equation}
  Therefore, by \eqref{eq:adding-W-neglig} it follows that with probability one,
  \begin{align}
    &\liminf_{|x| \to \infty} \frac{{H}_\alpha^{-1}(H_\alpha(\rho_0(x + B(t))) + t)}{\rho_0(x)}\nonumber\\
    &\geq \liminf_{|x| \to \infty} \frac{{H}_\alpha^{-1}(H_\alpha(\rho_0(x + B(t))) + t)}{\rho_0(x + B(t))}\nonumber\\\nonumber\\
     &\geq +\infty.
  \end{align}
  By Fatou's lemma,
  \begin{equation}
    \liminf_{|x| \to \infty} \frac{\rho(t,x)}{\rho_0(x)} = \liminf_{|x| \to \infty} \E \left( \frac{{H}_\alpha^{-1}( H_\alpha(\rho_0(x + B(t))) + t)}{\rho_0(x)} \right) \geq +\infty.
  \end{equation}
\end{proof}

\section{Deterministic forcing} \label{S:determ-forcing}
Given a deterministic function $z: [0,T] \times \mathbb{R}^d \to \mathbb{R}$ that is continuous and satisfies for any $T>0$,
\begin{equation}
  \sup_{t \in [0,T]} \sup_{x \in \mathbb{R}^d} \frac{|z(t,x)|}{\rho_0(x)} < +\infty,
\end{equation}
we prove that the solution to the integral equation
\begin{equation} \label{eq:u-int-eq}
  u(t,x) = \int_0^t \int_{\mathbb{R}^d}G(t-s,x-y) f(u(s,y))dyds  + z(t,x)
\end{equation}
exists and does not blow up. Later we will use the results of this section to prove Theorem \ref{thm:main} by replacing $z(t,x)$ pathwise with
\begin{equation}
  z(t,x) = \int_{\mathbb{R}^d} G(t,x-y)u(0,y)dy + \sigma \int_0^t \int_{\mathbb{R}^d} G(t-s,x-y)W(dyds).
\end{equation}

We begin by building approximations. For $N>0$ define the cutoff version of $z$ by
\begin{equation} \label{eq:z_N}
    z_N(t,x) =
    \begin{cases}
      z(t,x) & \text{ if } |z(t,x)|\leq N,\\
      N & \text{ if } z(t,x)>N,\\
      -N & \text{ if } z(t,x) < -N.
    \end{cases}
\end{equation}
Similarly, we build an approximating sequence to $f$. Let
\begin{equation}
  f_N(u) =
  \begin{cases}
    f(u) & \text{ if } |u|\leq N\\
    f(N) & \text{ if } u>N\\
    f(-N) & \text{ if } u < -N.
  \end{cases}
\end{equation}
 Each of these $f_N$ are globally Lipschitz continuous by Assumption \ref{assum:f}.

Because of the boundedness of $z_N$ and the Lipschitz continuity of $f_N$, standard Picard  iteration arguments using the supremum norm  prove that there exists a unique bounded solution to $u_N$ solving
\begin{equation} \label{eq:u_N-eq}
  u_N(t,x) = \int_0^t \int_{\mathbb{R}^d}G(t-s,x-y) f_N(u_N(s,y))dyds + z_N(t,x).
\end{equation}

Now we show that we can use dynamic weighting techniques to get a bound on $u_N$ that is independent of $N$.

  Let $u_N(t,x)$ be a solution to \eqref{eq:u_N-eq}.
  Define
  \begin{align*}
    &v_N(t,x) := u_N(t,x) - z_N(t,x) \\
    &= \int_0^t \int_{\mathbb{R}^d} G(t-s,x-y) f_N(v_N(s,y) + z_N(s,y))dyds.
  \end{align*}
  Notice that $v_N$ is a weak solution of the PDE
  \begin{equation} \label{eq:PDE}
    \frac{\partial v_N}{\partial t}(t,x)  = \frac{1}{2} \Delta v_N(t,x) + f_N(v_N(t,x) + z_N(t,x)).
  \end{equation}
  In general, $v_N(t,x)$ is not strongly differentiable in $t$ or $x$. See, for example, Chapter 4.3 of \cite{pazy}.
  Let $\rho(t,x)$ be the weight defined in \eqref{eq:weight-def}, let
  \begin{equation}
    M:=\sup_{x \in \mathbb{R}^d} \sup_{t \in [0,T]} \frac{|z(t,x)|}{\rho_0(x)} < +\infty,
  \end{equation}
  and define the weighted function
  \begin{equation} \label{eq:q_N-def}
    q_N(t,x) := \frac{v_N(t,x) + M \rho_{0}(x) }{\rho(t,x)} .
  \end{equation}

  Note that
  \begin{equation}
    q_N(t,x) \geq \frac{v_N(t,x) + z_N(t,x)}{\rho(t,x)} = \frac{u_N(t,x)}{\rho(t,x)},
  \end{equation}
  and $q_N(t,x)$ is weakly differentiable, while $\frac{u_N(t,x)}{\rho(t,x)}$ is not weakly differentiable.

The next result shows that a standard technique (See Proposition 6.2.2 of \cite{cerrai-book} or Theorem 7.7 of \cite{dpz-book}) used to regularize solutions in the bounded domain setting can also be applied to this setting of weighted spaces on $\mathbb{R}^d$.
\begin{lemma} \label{lem:derivative}
  Without loss of generality, we can assume that $q_N$ is strongly differentiable
  and that
  \begin{align} \label{eq:q_n-deriv}
    &\frac{\partial q_N}{\partial t}(t,x)\nonumber\\
    &\leq \frac{1}{2} \Delta q_N(t,x) +  \frac{\nabla q_N(t,x)\cdot \nabla \rho(t,x)}{\rho(t,x)}- \frac{\frac{M}{2} \Delta\rho_{0}(x)}{\rho(t,x)}
     \nonumber \\
    &\qquad+ \frac{f_N(q_N(t,x)\rho(t,x) - M \rho_0(x) + z_N(t,x))}{\rho(t,x)} - \frac{q_N(t,x)h_\alpha(\rho(t,x)) }{\rho(t,x)}.
  \end{align}

  More specifically, there exists a sequence $q_{N,\lambda}$ that is strongly differentiable for which
  \begin{equation}
    \lim_{\lambda \to +\infty} \sup_{x \in \mathbb{R}^d} \left| q_{N,\lambda}(t,x) - q_N(t,x)\right| = 0 \text{ for all } t>0
  \end{equation}
  and for any $T>0$,
  \begin{align*}
    \limsup_{\lambda \to \infty} \int_0^T \sup_{x \in \mathbb{R}^d}\Bigg( &\frac{\partial q_{N,\lambda}}{\partial t}(t,x) - \frac{1}{2} \Delta q_{N,\lambda}(t,x)- \frac{\nabla q_{N,\lambda}(t,x)\cdot \nabla \rho(t,x)}{\rho(t,x)} \nonumber\\
     &+ \frac{\frac{M}{2} \Delta\rho_{0}(x)}{\rho(t,x)}
     - \frac{f_N(q_{N,\lambda}(t,x)\rho(t,x)-M\rho_0(x) + z_N(t,x))}{\rho(t,x)} \nonumber\\
    &+ \frac{q_{N,\lambda}(t,x)h_\alpha(\rho(t,x)) }{\rho(t,x)} \Bigg)dt \leq 0.
  \end{align*}
\end{lemma}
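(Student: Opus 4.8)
The plan is to adapt the resolvent (Yosida) regularization of \cite{cerrai-book, dpz-book} to the present weighted, unbounded setting. Let $(P_t)_{t\ge 0}$ be the heat semigroup generated by $\frac12\Delta$ and, for $\lambda>0$, let $J_\lambda:=\lambda\big(\lambda-\tfrac12\Delta\big)^{-1}=\int_0^\infty\lambda e^{-\lambda r}P_r\,dr$ be the associated resolvent; it is a contraction on $C_b(\mathbb{R}^d)$, it maps into $\Dom(\Delta)$ with $\frac12\Delta J_\lambda\phi=\lambda(J_\lambda\phi-\phi)$, and $J_\lambda\phi\to\phi$ as $\lambda\to\infty$, uniformly on compact sets. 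Writing $g_N(s,\cdot):=f_N(v_N(s,\cdot)+z_N(s,\cdot))$, which is bounded by $\sup_{|w|\le N}|f(w)|$ and continuous, the identity $v_N(t)=\int_0^t P_{t-s}g_N(s)\,ds$ yields $v_{N,\lambda}(t):=J_\lambda v_N(t)=\int_0^t P_{t-s}J_\lambda g_N(s)\,ds$. Since $J_\lambda g_N(s)\in\Dom(\Delta)$ with $\frac12\Delta J_\lambda g_N(s)$ bounded uniformly in $s$, standard parabolic regularity (\cite{pazy}, Ch.~4) shows that $v_{N,\lambda}$ is $C^1$ in $t$, $C^2$ in $x$, and solves $\partial_t v_{N,\lambda}=\frac12\Delta v_{N,\lambda}+J_\lambda g_N$.

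Next I would set $q_{N,\lambda}:=(v_{N,\lambda}+M\rho_0)/\rho$, which is strongly differentiable because $\rho\ge\rho_0\ge 1$ is finite and smooth by Proposition \ref{prop:super-solution} and $\rho_0$ is twice differentiable. Differentiating the quotient and substituting $\Delta v_{N,\lambda}=\Delta(q_{N,\lambda}\rho-M\rho_0)=\rho\Delta q_{N,\lambda}+2\nabla q_{N,\lambda}\cdot\nabla\rho+q_{N,\lambda}\Delta\rho-M\Delta\rho_0$ gives the exact identity
\[\frac{\partial q_{N,\lambda}}{\partial t}=\frac12\Delta q_{N,\lambda}+\frac{\nabla q_{N,\lambda}\cdot\nabla\rho}{\rho}-\frac{\tfrac M2\Delta\rho_0}{\rho}+\frac{J_\lambda g_N}{\rho}-\frac{q_{N,\lambda}}{\rho}\Big(\frac{\partial\rho}{\partial t}-\frac12\Delta\rho\Big).\]
By Proposition \ref{prop:super-solution}, $\partial_t\rho-\frac12\Delta\rho\ge h_\alpha(\rho)\ge 0$, so at every point with $q_{N,\lambda}\ge 0$ the last term is bounded above by $-q_{N,\lambda}h_\alpha(\rho)/\rho$; recalling $q_{N,\lambda}\rho-M\rho_0+z_N=v_{N,\lambda}+z_N$, this produces
\[\frac{\partial q_{N,\lambda}}{\partial t}\le\frac12\Delta q_{N,\lambda}+\frac{\nabla q_{N,\lambda}\cdot\nabla\rho}{\rho}-\frac{\tfrac M2\Delta\rho_0}{\rho}+\frac{f_N(q_{N,\lambda}\rho-M\rho_0+z_N)}{\rho}-\frac{q_{N,\lambda}h_\alpha(\rho)}{\rho}+\mathcal E_\lambda,\]
with error term $\mathcal E_\lambda:=\rho^{-1}\big(J_\lambda g_N-f_N(v_{N,\lambda}+z_N)\big)$, and letting $\lambda\to\infty$ recovers \eqref{eq:q_n-deriv} for $q_N$.

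To finish, one must show $\int_0^T\sup_{x}\mathcal E_\lambda(t,x)\,dt\to 0$. I would split $J_\lambda g_N-f_N(v_{N,\lambda}+z_N)=(J_\lambda g_N-g_N)+(f_N(v_N+z_N)-f_N(v_{N,\lambda}+z_N))$. For the second piece, $f_N$ is globally Lipschitz and $v_N(t,\cdot)$ is bounded and uniformly continuous (being a heat convolution of a bounded function), so $\sup_x|v_{N,\lambda}(t,x)-v_N(t,x)|=\sup_x|J_\lambda v_N(t,x)-v_N(t,x)|\to0$; dividing by $\rho\ge1$ and using the uniform-in-$t$ bound $\|v_N(t)\|_\infty\le T\sup_{|w|\le N}|f(w)|$, dominated convergence sends the $t$-integral to $0$. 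For the first piece, fix $R>0$: on $\{|x|\le R\}$, $\sup_{|x|\le R}|J_\lambda g_N(t,x)-g_N(t,x)|\to0$ by uniform continuity of $g_N(t,\cdot)$ on compacts together with boundedness, while on $\{|x|>R\}$, $\rho^{-1}|J_\lambda g_N-g_N|\le2\rho^{-1}\sup_{|w|\le N}|f(w)|$, which is small uniformly in $\lambda$ once $R$ is large because $\rho(t,x)\to\infty$ as $|x|\to\infty$ uniformly on $[0,T]$ by Lemma \ref{lem:rho-dominates}. The main obstacle is exactly this last passage to the limit in the \emph{weighted} supremum norm over all of $\mathbb{R}^d$: $J_\lambda\to I$ converges uniformly only on bounded uniformly continuous functions, so one must exploit heat-smoothing of $v_N$, the polynomial growth \eqref{eq:h-polynomial-growth} of $h$ (hence of $h_\alpha$) together with the exponential bound $\rho(t,x)\le C_{T,\alpha}e^{|x|+t}$ from Proposition \ref{prop:super-solution} to keep every term integrable, and the decay $\rho^{-1}\to0$ at spatial infinity to absorb the possibly non-uniformly-continuous contributions of $z_N$. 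A secondary delicate point is the sign of $q_{N,\lambda}$: the displayed inequality is derived where $q_{N,\lambda}\ge0$, which is harmless for the subsequent use of the lemma (an upper bound on $\sup_x q_N$), since $v_{N,\lambda}$ is bounded while $\rho_0(x)\to\infty$, so $\{q_{N,\lambda}<0\}$ stays within a bounded ball and does not influence the spatial supremum.
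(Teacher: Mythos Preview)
Your approach is essentially the same as the paper's: both regularize via the resolvent $J_\lambda=\lambda(\lambda-\tfrac12\Delta)^{-1}$ applied to $v_N$, form $q_{N,\lambda}=(v_{N,\lambda}+M\rho_0)/\rho$, derive the quotient identity, and then exploit the decay $\rho^{-1}\to0$ at spatial infinity to upgrade the merely local convergence $J_\lambda\to I$ into convergence in the weighted sup norm (the paper packages this step as Corollary~\ref{cor:WLOG-strong} in the appendix). Your write-up is in fact slightly more careful than the paper's in flagging the sign constraint $q_{N,\lambda}\ge0$ needed when invoking $\partial_t\rho-\tfrac12\Delta\rho\ge h_\alpha(\rho)$, which the paper applies without comment; as you correctly observe, this is harmless since only the spatial supremum is used in Lemma~\ref{lem:determ-existence-bounded}.
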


\begin{proof}
%    Referring to \eqref{eq:PDE} and \eqref{eq:q_N-def},
%   we calculate that $q_N$ weakly satisfies
%  \begin{align*}
%    &\frac{\partial q_N}{\partial t}(t,x)\\
%    &= \frac{\frac{\partial v_N}{\partial t}(t,x)}{\rho(t,x)} - \frac{\left(v_N(t,x) + M \rho_0(x)\right)\left(\frac{\partial \rho}{\partial t}(t,x)\right)}{(\rho(t,x))^2} \nonumber \\
%    &= \frac{\frac{1}{2}\Delta v_N(t,x)}{\rho(t,x)} + \frac{f(v_N(t,x) + z_N(t,x))}{\rho(t,x)} - \frac{q_N(t,x)\left(\frac{\partial \rho}{\partial t}(t,x)\right)}{\rho(t,x)}.
%%    &\qquad- \frac{\left(v_N(t,x) + M \rho_0(x)\right)\left(\frac{1}{2}\Delta \rho(t,x) + h_\alpha(\rho(t,x))\right)}{(\rho(t,x))^2}.
%  \end{align*}
%  Because $v_N(t,x) = q_N(t,x)\rho(t,x) - M\rho_{0}(x) $,
%  \begin{align*}
%    \Delta v_N(t,x) = &\rho(t,x) \Delta q_N(t,x) + 2 \nabla q_N(t,x) \cdot \nabla \rho(t,x)\\
%     &+ q_N(t,x)\Delta\rho(t,x)
%    - M \Delta \rho_{0}(x).
%  \end{align*}
%  Combining these estimates, we see that  $q_N(t,x)$ weakly satisfies
%  \begin{align} \label{eq:q_n-deriv-1}
%    &\frac{\partial q_N}{\partial t}(t,x)\nonumber\\
%    &= \frac{1}{2} \Delta q_N(t,x) +  \frac{\nabla q_N(t,x)\cdot \nabla \rho(t,x)}{\rho(t,x)} + q_N(t,x)\frac{\left(\frac{1}{2}\Delta \rho(t,x) -\frac{\partial \rho}{\partial t}(t,x) \right) }{\rho(t,x)}\nonumber \\
%    &\qquad- \frac{\frac{M}{2} \Delta\rho_{0}(x)}{\rho(t,x)}  + \frac{f_N(q_N(t,x)\rho(t,x) - M\rho_0(x) + z_N(t,x))}{\rho(t,x)}.
%  \end{align}
%
The full details of the construction of the approximating sequence $q_{N,\lambda}$ via resolvent operators is in Proposition \ref{prop:WLOG-strong} and Corollary \ref{cor:WLOG-strong} in the appendix.

By Corollary \ref{cor:WLOG-strong}, the approximation $q_{N,\lambda} = \frac{v_{N,\lambda}(t,x)}{\rho(t,x)} + \frac{M \rho_0(x)}{\rho(t,x)}$ is strongly differentiable,
\begin{equation}
  \lim_{\lambda \to \infty} \sup_{x \in \mathbb{R}^d} |q_{N,\lambda}(t,x)-q_N(t,x)| = 0,
\end{equation}
and
\begin{align} \label{eq:q_n-deriv}
  &\frac{\partial q_{N,\lambda}}{\partial t}(t,x)\nonumber\\
    &= \frac{1}{2} \Delta q_{N,\lambda}(t,x) +  \frac{\nabla q_{N,\lambda}(t,x)\cdot \nabla \rho(t,x)}{\rho(t,x)}  + q_{N,\lambda}(t,x)\frac{\left(\frac{1}{2}\Delta \rho(t,x) -\frac{\partial \rho}{\partial t}(t,x) \right) }{\rho(t,x)}\nonumber \\
    &\qquad- \frac{\frac{M}{2} \Delta\rho_{0}(x)}{\rho(t,x)}  + \frac{f_N(q_{N,\lambda}(t,x)\rho(t,x) - M\rho_0(x) + z_N(t,x))}{\rho(t,x)}\nonumber\\
    &\qquad+ \frac{\delta_\lambda(t,x)}{\rho(t,x)}.
\end{align}
where
\begin{equation}
  \lim_{\lambda \to +\infty} \int_0^T\sup_{x \in \mathbb{R}^d} \frac{|\delta_\lambda(t,x)|}{\rho(t,x)}dt = 0.
\end{equation}
By \eqref{eq:weight-deriv}
\begin{align} \label{eq:q_n-deriv}
  &\frac{\partial q_{N,\lambda}}{\partial t}(t,x)\nonumber\\
    &\leq \frac{1}{2} \Delta q_{N,\lambda}(t,x) +  \frac{\nabla q_{N,\lambda}(t,x)\cdot \nabla \rho(t,x)}{\rho(t,x)}  - q_{N,\lambda}(t,x)\frac{h_\alpha(\rho(t,x)) }{\rho(t,x)}\nonumber \\
    &\qquad- \frac{\frac{M}{2} \Delta\rho_{0}(x)}{\rho(t,x)}  + \frac{f_N(q_{N,\lambda}(t,x)\rho(t,x) - M\rho_0(x) + z_N(t,x))}{\rho(t,x)}\nonumber\\
    &\qquad+ \frac{\delta_\lambda(t,x)}{\rho(t,x)}.
\end{align}

This proves the result.
\end{proof}

\begin{lemma} \label{lem:determ-existence-bounded}
  Assume that $z: [0,T]\times \mathbb{R}^d \to \mathbb{R}$ is a continuous function such that
  \begin{equation}
    M:= \sup_{x \in \mathbb{R}^d} \sup_{t \in [0,T]} \frac{|z(t,x)|}{\rho_0(x)}  < +\infty.
  \end{equation}
  where $\rho_0(x) = \sqrt{\log(e + |x|^2)}$ as defined in \eqref{eq:rho-0}.
  Let $\rho(t,x)$ be the dynamic weight described in \eqref{eq:weight-def}. Let $u_N$ be the unique solution to \eqref{eq:u_N-eq}.
  There exists a constant $K(T,M)>0$ (independent of $N$) such that for any $N>0$,
  \begin{equation}
    \sup_{t \in [0,T]} \sup_{x \in \mathbb{R}^d} \frac{|u_N(t,x)|}{\rho(t,x)} \leq K(T,M).
  \end{equation}
\end{lemma}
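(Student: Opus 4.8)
The plan is to estimate $q_N(t,x)$ from above using the differential inequality for $q_{N,\lambda}$ from Lemma \ref{lem:derivative} and a comparison-type argument with a spatially-constant super-solution, then pass $\lambda \to \infty$ and $N \to \infty$. First I would fix $t$ and examine where $x \mapsto q_{N,\lambda}(t,x)$ attains (or nearly attains) its spatial supremum. Because $q_{N,\lambda}(t,\cdot) \in C_0$-type behavior is not immediate — the functions $v_N$ grow like $\rho_0$ and $\rho$ grows faster, so $q_{N,\lambda}(t,x) \to 0$ as $|x|\to\infty$ by Lemma \ref{lem:rho-dominates} — the supremum is attained at some finite point $x_t$ (or approached along a sequence); at such a point $\Delta q_{N,\lambda} \leq 0$ and $\nabla q_{N,\lambda} = 0$, killing the second-order and gradient terms in \eqref{eq:q_n-deriv}. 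This is the standard maximum-principle reduction, and it is what makes the weighted setup on $\mathbb{R}^d$ behave like the bounded-domain case.

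Next I would control the remaining forcing and lower-order terms at the maximizing point. The term $-\frac{M}{2}\Delta \rho_0(x)/\rho(t,x)$ is bounded: $\rho_0 = \sqrt{\log(e+|x|^2)}$ has bounded Laplacian and $\rho \geq \rho_0 \geq 1$, so this contributes at most a constant $C_1$. The crucial term is
\[
\frac{f_N(q_N \rho - M\rho_0 + z_N)}{\rho} - \frac{q_N h_\alpha(\rho)}{\rho}.
\]
Using $|z_N| \leq M\rho_0$ and $|f_N(u)| \leq h(|u|)$ (Assumption \ref{assum:f}, and $h$ increasing), the argument has absolute value at most $q_N \rho + 2M\rho_0 \leq (q_N + 2M)\rho$ (on the region $q_N \geq 0$, which is all that matters for an upper bound), so the forcing term is bounded by $h((q_N+2M)\rho)/\rho$. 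Now apply Lemma \ref{lem:convexity} with the product $(q_N + 2M)\cdot \rho$:
\[
h\big((q_N+2M)\rho\big) \leq \rho\, h_\alpha(q_N + 2M) + (q_N+2M)\, h_{\frac{\alpha}{\alpha-1}}(\rho).
\]
Wait — I want the $h_\alpha(\rho)$ term to cancel, so I should instead write the product the other way: $h(\rho \cdot (q_N+2M)) \leq (q_N+2M) h_\alpha(\rho) + \rho\, h_{\frac{\alpha}{\alpha-1}}(\rho^{-1}\cdot\text{(stuff)})$ — more carefully, apply the lemma as $h(\rho q) \leq q h_\alpha(\rho) + \rho h_{\frac{\alpha}{\alpha-1}}(q)$ with $q = q_N + 2M$. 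Then dividing by $\rho$, the forcing term is at most $(q_N + 2M)\frac{h_\alpha(\rho)}{\rho} + h_{\frac{\alpha}{\alpha-1}}(q_N+2M)$. The $(q_N+2M)\frac{h_\alpha(\rho)}{\rho}$ piece combines with $-q_N \frac{h_\alpha(\rho)}{\rho}$ to leave $2M \frac{h_\alpha(\rho)}{\rho}$, and here I would invoke the polynomial growth \eqref{eq:h-polynomial-growth} together with $h_\alpha(\rho)/\rho = h(\rho^\alpha)/\rho^\alpha \leq C(1 + \rho^{\alpha(p-1)})/\rho^{\ldots}$ — hmm, this is only bounded if $h(x)/x$ is bounded, which it is not. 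So that piece is genuinely unbounded in $x$, and I must be more careful: I should not split off $2M$ but rather keep track that $q_N + 2M$ appears and arrange the algebra so that the coefficient of $h_\alpha(\rho)/\rho$ is exactly $q_N$ (i.e. bound the $f_N$ argument by $q_N \rho$ plus a lower-order correction absorbed differently). Concretely, write the $f_N$ argument as $q_N\rho - M\rho_0 + z_N + (\text{nothing extra})$ and since $|-M\rho_0 + z_N| \le 2M\rho_0 \le 2M\rho$, use $h(a+b) \le \tfrac12 h(2a) + \tfrac12 h(2b)$ by convexity, so $h(|{\rm arg}|) \le \tfrac12 h(2q_N\rho) + \tfrac12 h(4M\rho)$; then Lemma \ref{lem:convexity} on each product gives a $q_N h_\alpha(\rho)$-type term with a constant prefactor that I can make $\le 1$ by choosing $\alpha$ (or absorbing via $h_\alpha(2\rho)$ vs $h_\alpha(\rho)$ — using convexity and $h_\alpha(2\rho) \le$ const $\cdot h_\alpha(\rho)\cdot 2^{\ldots}$, and re-scaling $\rho$). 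This bookkeeping to get the cancellation exact is the main obstacle.

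Granting the cancellation, one arrives at a scalar differential inequality of the form
\[
\frac{d}{dt}\Big(\sup_x q_{N,\lambda}(t,\cdot)\Big) \leq C_1 + h_{\frac{\alpha}{\alpha-1}}\big(\sup_x q_{N,\lambda}(t,\cdot) + C_2\big) + (\text{error}_\lambda),
\]
valid in an integrated/Dini-derivative sense (this is where the $\limsup_\lambda \int_0^T \ldots \leq 0$ statement of Lemma \ref{lem:derivative} is used — one does not differentiate the sup directly but compares integrals). Since $h_{\frac{\alpha}{\alpha-1}}$ satisfies the Osgood condition by Lemma \ref{lem:h-alpha} (applied with exponent $\frac{\alpha}{\alpha-1} > 1$), the comparison ODE $\dot\psi = C_1 + h_{\frac{\alpha}{\alpha-1}}(\psi + C_2)$ with initial value $\psi(0) = \sup_x q_N(0,x) = \sup_x (M\rho_0(x) + v_N(0,x))/\rho_0(x)$, which is bounded by a constant depending only on $M$ (note $v_N(0,\cdot) \equiv 0$ since the integral term vanishes at $t=0$, so $\psi(0) = M$), has a finite solution $\psi(t)$ on $[0,T]$ by the Osgood theory recalled in the introduction. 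Passing $\lambda \to \infty$ gives $\sup_x q_N(t,\cdot) \leq \psi(t) \leq \psi(T) =: K(T,M)$, uniformly in $N$, and since $u_N/\rho \leq q_N$ pointwise and the same bound applies to $-u_N$ by symmetry (redefining $q_N$ with $v_N$ replaced by $-v_N$), we conclude $\sup_{t,x} |u_N(t,x)|/\rho(t,x) \leq K(T,M)$, which is the claim.
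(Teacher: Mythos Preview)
Your overall architecture is exactly the paper's: show $q_N(t,\cdot)\in C_0$, evaluate the differential inequality at a spatial maximizer $x_t$ (where $\Delta q_N\le 0$, $\nabla q_N=0$), reduce to a scalar ODE governed by $h_{\alpha/(\alpha-1)}$, and invoke its Osgood property. The initial value $q_N(0,\cdot)\equiv M$, the bounded-Laplacian estimate for $\rho_0$, and the symmetric argument for $-u_N$ are all correct and match the paper.

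The genuine gap is the step you flag with ``Granting the cancellation.'' Your bound on the $f_N$ argument, $|q_N\rho - M\rho_0 + z_N|\le (q_N+2M)\rho$, is too crude: applying Lemma~\ref{lem:convexity} with $q=q_N+2M$ leaves a residual $2M\,h_\alpha(\rho)/\rho$, which is unbounded in $x$ because $h$ is superlinear. Your convexity-splitting attempt $h(a+b)\le\tfrac12 h(2a)+\tfrac12 h(2b)$ makes things worse: $h(2q_N\rho)$ produces a coefficient $2q_N$ in front of $h_\alpha(\rho)$, again failing to cancel against the single $-q_N h_\alpha(\rho)$ from the weight. No rescaling of $\alpha$ or $\rho$ rescues this, since the mismatch is multiplicative.

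The paper's resolution is a one-line observation you missed. Since $z_N\le M\rho_0$, the argument satisfies
\[
q_N\rho - M\rho_0 + z_N \;\le\; q_N\rho \quad\text{exactly},
\]
with no additive $2M$. The only issue is whether the argument is nonnegative (needed for $f_N(\cdot)\le h(\cdot)$ via monotonicity of $h$), and this holds whenever $q_N(t,x_t)>2M$, since then $q_N\rho - M\rho_0 + z_N \ge q_N\rho - 2M\rho_0 > 0$. So on $\{q_N>2M\}$ one has $f_N(\text{arg})\le h(q_N\rho)$, and Lemma~\ref{lem:convexity} with $q=q_N$ gives the exact cancellation
\[
\frac{h(q_N\rho)}{\rho} - \frac{q_N\,h_\alpha(\rho)}{\rho} \;\le\; h_{\alpha/(\alpha-1)}(q_N).
\]
The paper then tracks $\max\{\sup_x q_N(t,\cdot),\,2M\}$ (derivative zero when $\sup q_N\le 2M$, and the bound above otherwise), obtaining
\[
\frac{d^-}{dt}\max\Bigl\{\sup_x q_N,\,2M\Bigr\} \le CM + h_{\alpha/(\alpha-1)}\bigl(\sup_x q_N\bigr),
\]
and integrates to $K(T,M)=\max\bigl\{2M,\,H_{\alpha/(\alpha-1)}^{-1}\bigl(H_{\alpha/(\alpha-1)}(2M)+(CM+1)T\bigr)\bigr\}$. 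The ``bookkeeping'' you deferred is in fact the crux: work on the region where $q_N$ exceeds a threshold so that the $f_N$-argument is positive and dominated by $q_N\rho$ without slack.
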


\begin{proof}
  %Let $\theta \in \left( 0, \frac{1}{\nu_T}\right)$ where $\nu_T$ is from \eqref{eq:growth-restriction} and assume that
  By Lemma \ref{lem:derivative}, we assume without loss of generality that $q_N$ defined in \eqref{eq:q_N-def} is strongly differentiable.

  Because $v_N(t,x) $ is bounded and $\lim_{|x| \to +\infty} \frac{\rho_0(x)}{\rho(t,x)} = 0$ by Lemma \ref{lem:rho-dominates},
  \begin{equation}
    \lim_{|x| \to \infty} q_N(t,x) = \lim_{|x| \to \infty} \frac{v_N(t,x) + M \rho_0(x) }{\rho(t,x)} = 0.
  \end{equation}
  Therefore, $q_N(t,\cdot)$ is $C_0(\mathbb{R}^d)$, the space of continuous functions that disappear at infinity.

  This means that any maximizer of $q_N(t,x)$ is attained. Therefore, there exists $x_t \in \mathbb{R}^d$ such that
  \begin{equation}
    \sup_{x \in \mathbb{R}^d} q_{N}(t,x) = q_{N}(t,x_t).
  \end{equation}
  The left derivative is bounded by (see Proposition \ref{prop:norm-diff} and Lemma \ref{lem:derivative}),
  \begin{align}
    &\frac{d^-}{dt} \sup_{x \in \mathbb{R}^d} q_{N}(t,x) \nonumber\\
    &\leq \frac{1}{2}\Delta q_{N}(t,x_t) + \frac{\nabla q_N(t,x_t) \cdot \nabla \rho(t,x_t)}{\rho(t,x_t)} \nonumber\\
    & \qquad+ \frac{f_N( q_{N}(t,x_t)\rho(t,x_t) +z_N(t,x_t) - M \rho_{0}(x_t))}{\rho(t,x_t)} \nonumber \\
    &\qquad - \frac{M}{2}\frac{ \Delta \rho_{0}(x_t)}{\rho(t,x_t)}- \frac{q_{N}(t,x_t) h_\alpha(\rho(t,x_t))}{\rho(t,x_t)}.
    %\\
    %&\leq  \frac{f_N( q_{N}(t,x_t)\rho(t,x_t) +z_N(t,x_t) - M \rho_0(x_t) \wedge N)}{\rho(t,x_t)} - \frac{q_{N}(t,x_t) h((\rho(t,x_t))^\alpha)}{(\rho(t,x_t))^{\alpha}}.
  \end{align}
  Because $x_t$ is a local maximizer and we assumed without loss of generality that $q_N$ is twice differentiable,
  \begin{align*}
    &\frac{1}{2}\Delta q_N(t,x_t) \leq 0\text{ and } \nabla q_N(t,x_t) = 0.
  \end{align*}
  Therefore,

  \begin{align} \label{eq:q-deriv-1}
    &\frac{d^-}{dt} \sup_{x \in \mathbb{R}^d} q_{N}(t,x) \nonumber\\
    &\leq  -\frac{M \Delta \rho_0(x_t)}{2\rho(t,x)}\nonumber\\
    &\qquad +\frac{f_N( q_{N}(t,x_t)\rho(t,x_t) +z_N(t,x_t) - M \rho_0(x_t) )}{\rho(t,x_t)} - \frac{q_{N}(t,x_t) h_\alpha(\rho(t,x_t))}{\rho(t,x_t)}.
  \end{align}

%  \begin{align} \label{eq:q-deriv-1}
%    &\frac{d^-}{dt} \sup_{x \in \mathbb{R}^d} q_{N,\lambda}(t,x) \nonumber\\
%    &\leq \mathscr{L}_t q_{N,\lambda}(t,x_t) + \frac{f_N( q_{N,\lambda}(t,x_t)\rho(t,x_t) +z(t,x_t) - M \rho_0(x_t))}{\rho(t,x_t)} \nonumber\\
%    &\qquad + \frac{M}{2}\lambda R(\lambda, \mathscr{L}_t)\frac{ \Delta \rho_0(\cdot)}{\rho(t,\cdot)}(x_t)
%    +\delta_\lambda(t,x_t)\nonumber\\
%    &\leq  \frac{f_N( q_{N,\lambda}(t,x_t)\rho(t,x_t) +z(t,x_t) - M \rho_0(x_t))}{\rho(t,x_t)} - \frac{q_{N,\lambda}(t,x_t) h((\rho(t,x_t))^\alpha)}{(\rho(t,x_t))^{\alpha}} \nonumber \\
%    &\qquad+ CM + |\delta_\lambda(t)|_{C_0(\mathbb{R}^d)}.
%  \end{align}

  %
%
%
%  Let $\mathscr{P}_{s,t}: C_0(\mathbb{R}^d) \to C_0(\mathbb{R}^d)$ be the two parameter Markov semigroup generated by $\mathcal{L}_t$. $P_{s,t}$ is positive and a contraction. Let $p(s,t,x,y)$ be the fundamental solution of the forward Kolmogorov equation such that
%  \begin{equation}
%    \mathscr{P}_{s,t}\phi(x) = \int_{\mathbb{R}^d} p(s,t,x,y) \phi(y)dy
%  \end{equation}
%  The solution to \eqref{eq:q_n-deriv} will satisfy the integral equation,
%  \begin{equation} \label{eq:q_N-integral}
%    q_N(t,x) = \int_0^t \int_{\mathbb{R}^d} p(s,t,x,y) \left(-\frac{\frac{M}{2} \Delta(\rho(0,x))}{\rho(t,x)}  +\frac{ f_N(v_N(s,y) + z(s,y))}{\rho(s,y)} - \frac{q_N(s,y)h((\rho(s,y))^\alpha)}{(\rho(s,y))^\alpha}\right)dy ds.
%  \end{equation}

  If $q_N(t,x_t) > 2M$, then by Lemma \ref{lem:rho-dominates},
  \begin{equation}
    q_N(t,x_t)\rho(t,x_t) + z_N(t,x_t) - M \rho_0(x_t)>0
  \end{equation}
  By Assumption \ref{assum:f}, $h$ is increasing and  $f_N(x) \leq h(x)$ for $x>0$, and because $|z_N(t,x)| \leq M \rho_{0}(x)$, whenever $q_N(t,x_t) > 2M$,
  \[f_N\left(q_{N}(t,x_t) \rho(t,x_t) - M \rho_0(x_t)  + z_N(t,x_t)\right) \leq h(q_{N}(t,x_t) \rho(t,x_t)).\]% \leq h(|v_N(t,x_t)|+ M\rho(0,x_t)) .\]

  Therefore, by \eqref{eq:q-deriv-1} and the fact that $\sup_{x \in \mathbb{R}^d} |\Delta \rho_0(x)|<+\infty$,
  \begin{align}
    \frac{d^-}{dt} \max\left\{\sup_{x \in \mathbb{R}^d} q_{N}(t,x) , 2M \right\} \leq &\frac{h(q_{N}(t,x_t)\rho(t,x_t))}{\rho(t,x_t)} - \frac{q_{N}(t,x_t) h_\alpha(\rho(t,x_t))}{\rho(t,x_t)} \nonumber\\
    &+CM
  \end{align}

  By Lemma \ref{lem:convexity},
  \begin{equation}
    \frac{d^-}{dt} \max\left\{\sup_{x \in \mathbb{R}^d} q_{N}(t,x) , 2M \right\} \leq CM + h_{\frac{\alpha}{\alpha-1}}(q_{N}(t,x_t))
  \end{equation}
  Let $H_{\frac{\alpha}{\alpha-1}}$ be defined as in \eqref{eq:H-alpha}. Then
  \begin{align}
    \frac{d^-}{dt} H_{\frac{\alpha}{\alpha-1}} \left(\max\left\{\sup_{x \in \mathbb{R}^d} q_{N}(t,x) , 2M \right\}\right)
    \leq CM + 1 .
  \end{align}
  The above line holds because $H_{\frac{\alpha}{\alpha-1}}'(x) = \frac{1}{h_{\frac{\alpha}{\alpha-1}}(x)}$, which is bounded from above. Consequently,
  \begin{align} \label{eq:q_N-bound}
    &\max\left\{\sup_{x \in \mathbb{R}^d} q_{N}(t,x) , 2M \right\} \leq H_{\frac{\alpha}{\alpha-1}}^{-1} \left( H_{{\frac{\alpha}{\alpha-1}}} \left(2M \right) + (CM+1)t  \right).
  \end{align}
  and
  \begin{equation}
    \sup_{x \in \mathbb{R}^d} q_N(t,x) \leq \max\left\{2M, H_{\frac{\alpha}{\alpha-1}}^{-1} \left( H_{{\frac{\alpha}{\alpha-1}}} \left(M \right) + (CM+1)t  \right) \right\}
  \end{equation}
  %Therefore,
%  \begin{equation}
%    \sup_{x \in \mathbb{R}^d} q_{N,\lambda}(t,x) \leq CMt + \int_0^t |\delta_\lambda(s)|_{C_0(\mathbb{R}^d)}ds+ \int_0^t \frac{h\left(\left( \sup_{x \in \mathbb{R}^d} q_{N,\lambda}(s,x)\right)^{\frac{\alpha}{\alpha-1}}\right)}{\left(\sup_{x \in \mathbb{R}^d} q_{N,\lambda}(s,x) \right)^{\frac{1}{\alpha-1}}}ds.
%  \end{equation}
 % Let $\lambda \to \infty$ and use the dominated convergence theorem to prove that the $\delta_\lambda$ term disappears. Therefore,
%  \begin{equation}
%    \sup_{x \in \mathbb{R}^d} q_{N}(t,x) \leq H_{\frac{\alpha}{\alpha-1}}^{-1} \left( H_{{\frac{\alpha}{\alpha-1}}} \left(M \right) + (CM+1)t \right).
%  \end{equation}

 % The real-valued function  $h_{\frac{\alpha}{\alpha-1}}$ satisfies the Osgood condition (Lemma \ref{lem:h-alpha}). Therefore, there exists $K(t,M)$ solving the ODE
%  \begin{equation}
%    \begin{cases}
%      \frac{dK}{dt}(t,M) = CM + \frac{h(K^{\frac{\alpha}{\alpha-1}})}{K^{\frac{1}{\alpha-1}}}\\
%      K(0,M) = M.
%    \end{cases}
%  \end{equation}
%
%  We have the uniform bound
%  \begin{equation} \label{eq:q_N-bound}
%    \sup_{t \in [0,T]} \sup_{x \in \mathbb{R}^s} q_N(t,x) \leq K(M,T)
%  \end{equation}
  In particular, for any $x \in \mathbb{R}^d$ and $t \in [0,T]$,
  \begin{align} \label{eq:uniform-uN-bound-signed}
    u_N(t,x) &= v_N(t,x) + z(t,x) \nonumber\\
    & \leq  v_N(t,x) + M\rho_0(x) \nonumber \\
    &\leq q_N(t,x)\rho(t,x) \nonumber\\
    & \leq K(M,T) \rho(t,x)
  \end{align}
  with $K(M,T) = \max\left\{2M,H_{\frac{\alpha}{\alpha-1}}^{-1} \left( H_{{\frac{\alpha}{\alpha-1}}} \left(2M \right) + (CM+1)T  \right)\right\} $.

  So far, we have only proven an upper bound on $u_N(t,x)$. We can get the lower bound using the same argument as above with
  \begin{equation}
    \tilde q_N(t,x) = -\frac{u_N(t,x)}{\rho(t,x)} + \frac{M\rho_0(x) }{\rho(t,x)}
  \end{equation}
  The preceding argument requires  no modification and \eqref{eq:q_N-bound} holds for $\tilde q_N$ as well.
  Therefore,
  \begin{equation} \label{eq:uniform-uN-bound}
    |u_N(t,x)| \leq K(M,T) \rho(t,x).
  \end{equation}
\end{proof}

\begin{theorem} \label{thm:existence}
  Let $z:[0,+\infty)\times \mathbb{R}^d$ be a deterministic function satisfying for any $T>0$,
  \begin{equation}
    \sup_{t \in [0,T]} \sup_{x \in \mathbb{R}^d}\frac{|z(t,x)|}{\rho_0(x)}< +\infty
  \end{equation}
  Then there exists a global solution solving
  \begin{equation}
    u(t,x) = \int_0^t \int_{\mathbb{R}^d} G(t-s,x-y)f(u(s,y))dyds + z(t,x)
  \end{equation}
\end{theorem}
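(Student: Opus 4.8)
The plan is to pass to the limit $N \to \infty$ in the approximating sequence $u_N$ solving \eqref{eq:u_N-eq}, using the uniform weighted bound from Lemma \ref{lem:determ-existence-bounded} to localize. First I would observe that, since $\sup_{t\in[0,T]}\sup_x |z(t,x)|/\rho_0(x)<\infty$ for every $T$, Lemma \ref{lem:determ-existence-bounded} gives a constant $K=K(T,M)$ with $\sup_{t\in[0,T]}\sup_x |u_N(t,x)|/\rho(t,x)\le K$ for all $N$, and moreover $z_N=z$ on the set where $|z|\le N$. The key point is that the weighted bound is genuinely $N$-independent, so on any fixed compact set $Q\subset[0,T]\times\mathbb{R}^d$ we have $\sup_{(t,x)\in Q}|u_N(t,x)|\le K\sup_{(t,x)\in Q}\rho(t,x)=:R_Q<\infty$, again independent of $N$.

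Next I would run a standard localization/consistency argument. Fix a compact set $Q$ and choose $N$ large enough that $R_Q\le N$ and $N$ exceeds $\sup_Q|z|$. On $Q$ we then have $f_N(u_N)=f(u_N)$ and $z_N=z$, so $u_N$ satisfies \eqref{eq:u-int-eq} with the true nonlinearity, at least insofar as the relevant arguments stay in $Q$. To make this rigorous I would instead argue via uniqueness: for $N_2>N_1$ large, set $w=u_{N_1}-u_{N_2}$; both solve the same integral equation with the globally Lipschitz (on the relevant range) truncated nonlinearity $f_{N_1}$ up to the first exit from the ball where both stay bounded by $N_1$, and a Gr\"onwall estimate in the supremum norm on compact sets shows $w\equiv 0$ there. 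Hence $u_N$ is eventually constant in $N$ on each compact set, and I can define $u(t,x):=\lim_{N\to\infty}u_N(t,x)$, a well-defined continuous function satisfying $\sup_{[0,T]\times\mathbb{R}^d}|u(t,x)|/\rho(t,x)\le K(T,M)$.

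Finally I would verify that this $u$ solves the integral equation \eqref{eq:u-int-eq}. Fix $(t,x)$ and $T\ge t$. Since $\rho(t,x)\le C_{T,\alpha}e^{|x|+t}$ by \eqref{eq:rho-exp}, the bound $|u(s,y)|\le K\rho(s,y)\le K C_{T,\alpha}e^{|y|+s}$ together with the polynomial growth $|f(v)|\le h(|v|)\le C(1+|v|^p)$ from \eqref{eq:h-polynomial-growth}, \eqref{eq:dominate} gives $|f(u(s,y))|\le C'(1+e^{p|y|})$, which is integrable against $G(t-s,x-y)$ over $[0,t]\times\mathbb{R}^d$ because the Gaussian kernel decays like $e^{-|x-y|^2/(2(t-s))}$ and kills any exponential. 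For $N$ large (depending on the compact region carrying most of the mass, plus a tail estimate controlled by the Gaussian decay and the exponential growth bound), $f_N(u_N(s,y))\to f(u(s,y))$ pointwise and is dominated by an integrable function uniformly in $N$; dominated convergence then lets me pass to the limit in \eqref{eq:u_N-eq} to conclude
\begin{equation}
  u(t,x) = \int_0^t\int_{\mathbb{R}^d} G(t-s,x-y)f(u(s,y))\,dy\,ds + z(t,x).
\end{equation}
Since this holds for every $T>0$ and $u$ is finite everywhere, $u$ is a global solution.

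The main obstacle is the uniform-in-$N$ tail control needed to justify the dominated convergence in the last step: one must be sure that $f_N(u_N(s,y))$ does not develop large contributions from the region $|y|$ large where the truncation $f_N$ differs from $f$. This is handled by combining the $N$-independent weighted bound $|u_N|\le K\rho$ with the explicit exponential bound $\rho(s,y)\le C_{T,\alpha}e^{|y|+s}$ and the superexponential Gaussian decay of $G(t-s,x-\cdot)$, so that the tail of the $y$-integral is uniformly small; the remaining bounded region is handled by the compact-set consistency established above.
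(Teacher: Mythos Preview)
Your overall strategy---the uniform weighted bound from Lemma \ref{lem:determ-existence-bounded}, the exponential control $\rho(s,y)\le C_{T,\alpha}e^{|y|+s}$ from \eqref{eq:rho-exp}, the polynomial growth \eqref{eq:h-polynomial-growth}, and then dominated convergence against the Gaussian kernel---is exactly the paper's. The gap is your step 3: the claim that $u_N$ is \emph{eventually constant in $N$} on each compact set is not correct, and the Gr\"onwall-on-compacts argument you sketch cannot close. The mild equation \eqref{eq:u_N-eq} is nonlocal in space: to evaluate $u_{N_i}(t,x)$ for $(t,x)\in Q$ you must integrate $f_{N_i}(u_{N_i}(s,y))$ over all $y\in\mathbb{R}^d$. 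For any finite $N_1$, the set $\{|z|>N_1\}$ is nonempty (since $z$ is unbounded) and there $z_{N_1}\ne z_{N_2}$; this far-field discrepancy in the forcing makes $u_{N_1}\ne u_{N_2}$ globally, and the heat kernel propagates the difference back into $Q$. The ``first-exit'' localization you invoke is an ODE/stopping-time device and has no analogue for a convolution over $\mathbb{R}^d$. What your tail estimate does give is that this far-field contribution is \emph{small} (Gaussian decay beats the exponential growth of $K\rho$), so $(u_N)$ is Cauchy on compacts---but not eventually constant.

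The paper handles this step differently: it does not attempt to compare $u_{N_1}$ with $u_{N_2}$ directly, but instead uses the uniform bound $|f_N(u_N(s,y))|\le h(K\rho(s,y))\le C_{T,\alpha,p}e^{C_{T,\alpha,p}(|y|+s)}$ together with the smoothing of the heat kernel to show that the $v_N=u_N-z_N$ are uniformly bounded and equicontinuous on each compact subset of $[0,T]\times\mathbb{R}^d$, then extracts a locally uniformly convergent subsequence by Arzel\`a--Ascoli, and finally passes to the limit in \eqref{eq:u_N-eq} by dominated convergence---precisely the tail argument you describe in your last paragraph. Your proposal is easily repaired either by inserting this compactness step in place of step 3, or by upgrading your Gr\"onwall to a global weighted norm such as $\sup_x e^{-C|x|}|u_{N_1}(t,x)-u_{N_2}(t,x)|$, in which the far-field error becomes a source term that you can show tends to $0$ as $N_1\to\infty$; either route gives a limit $u$ to which your verification step applies verbatim.
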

\begin{proof}
  Let $z_N$ and  $u_N$ be the sequences defined in \eqref{eq:z_N} and \eqref{eq:u_N-eq}. Let
  \begin{equation}
    v_N(t,x) = u_N(t,x) - z_N(t,x) = \int_0^t \int_{\mathbb{R}^d} G(t-s,x-y)f(u(s,y))dyds.
  \end{equation}

  In Assumption \ref{assum:f}, we assumed that $H^{-1}(H(x) + t) \leq \exp(\exp(|x|^{\nu_T}))$ for $t \in [0,T]$.
  Therefore, by the definition of $\rho$, \eqref{eq:weight-def}
  \begin{align} \label{eq:rho-bound}
    &|\rho(t,x)| \leq \E \exp \left( \frac{1}{\alpha}\exp \left( \log\left(e+|x + B(t)|^2\right)^{\frac{\alpha \nu_{\alpha T}}{2}}\right) \right) \nonumber\\
    &\leq C_{T,\alpha} \E e^{C_{T,\alpha}|x + B(t)|} \leq C_{T,\alpha} e^{C_{T,\alpha}(|x|+t)}
  \end{align}
  because $\alpha \nu_{2T} < 2$.
  By assumption \eqref{eq:h-polynomial-growth}, $h(x) \leq C (1 +  |x|^p)$. By \eqref{eq:uniform-uN-bound} and \eqref{eq:rho-bound}, there exist large enough constants such that
  \begin{equation} \label{eq:hu-bound}
    | f_N(u_N(t,x))| \leq \left|h(u_N(t,x))\right| \leq C_{T,\alpha,p}e^{C_{T,\alpha,p}(|x|+t)}.
  \end{equation}

  Therefore, by \eqref{eq:uniform-uN-bound} and the fact that
  \begin{equation}
    v_N(t,x) = \int_0^t \int_{\mathbb{R}^d} G(t-s,x-y) f_N(u_N(s,y))dyds
  \end{equation}
  we can use standard regularization properties of the convolution with the fundamental solution of the heat equation along with the exponential bound of \eqref{eq:hu-bound} to show that the $v_N$ are uniformly bounded and equicontinuous on each compact subset of $[0,T]\times \mathbb{R}^d$.
  By the Arzela-Ascoli theorem, there exists a subsequence of $N$ such that
  $\{v_N(t,x)\}$ converges uniformly on bounded subsets of $(x,t)$ to a limit $\tilde{v}(t,x)$. By construction \eqref{eq:z_N}, the $z_N$ converge to $z$. By the dominated convergence theorem, the limit $\tilde{v}$ will solve
  \begin{equation}
    \tilde{v}(t,x) = \int_0^t \int_{\mathbb{R}^d}G(t-s,x-y) f(\tilde{v}(s,y) + z(s,y))ds dy.
  \end{equation}

  Then $\tilde{u}(t,x) = \tilde{v}(t,x) + z(t,x)$ is a solution to
  \begin{equation}
    \tilde{u}(t,x) = \int_0^t \int_{\mathbb{R}^d} G(t-s,x-y)f(\tilde{u}(s,y))dyds + z(t,x)
  \end{equation}
  and satisfies \eqref{eq:uniform-uN-bound} and therefore is
  is a global solution.
\end{proof}

\section{Existence of mild solution -- Proof of Theorem \ref{thm:main}} \label{S:proof}

Let
\begin{equation}
  U_0(t,x) = \int_{\mathbb{R}^d} G(t,x-y)u(0,y)dy.
\end{equation}
Let
\begin{equation}
  Z(t,x) = \sigma \int_0^t \int_{\mathbb{R}^d} G(t-s,x-y)dyds
\end{equation}
 be the stochastic convolution.
By Assumption \ref{assum:initial-data} and Jensen's inequality,
\begin{align}
  U_0(t,x) &= \int_{\mathbb{R}^d} G(t,x-y)u(0,y)dy  \nonumber\\
  &\leq C \int_{\mathbb{R}^d} G(t,x-y)\sqrt{\log(e + |y|^2)}dy\nonumber\\
  &\leq C \sqrt{ \log \left(e + \int_{\mathbb{R}^d} G(t,x-y)|y|^2dy \right)}\nonumber\\
  & \leq C \sqrt{ \log (e + |x|^2 + t)}.
\end{align}
Therefore,
\begin{equation}
  \sup_{t \in [0,T]} \sup_{x \in \mathbb{R}^d} \frac{U_0(t,x)}{\rho_0(x)} \leq C\sup_{t \in [0,T]} \sup_{x \in \mathbb{R}^d}\frac{\sqrt{ \log (e + |x|^2 + t)}}{\sqrt{ \log (e + |x|^2 )}} <+\infty.
\end{equation}
By \eqref{eq:stoch-conv-grow}, $z(t,x) = U_0(t,x) + Z(t,x)$ satisfies the assumptions of \ref{thm:existence} with probability one. Therefore, applying Theorem \ref{thm:existence} pathwise, there exists a random field solving
\begin{align}
  u(t,x) = &\int_{\mathbb{R}^d} G(t,x-y) u(0,y) + \int_0^t \int_{\mathbb{R}^d} G(t-s,x-y)f(u(s,y))dyds \nonumber\\
  &+ \sigma\int_0^t \int_{\mathbb{R}^d} G(t-s,x-y)W(dyds)
\end{align}
and such that for any $T>0$,
\begin{equation}
  \Pro \left( \sup_{t \in [0,T]} \sup_{x \in \mathbb{R}^d} \frac{|u(t,x)|}{\rho(t,x)}<  +\infty \right) =1.
\end{equation}

\section{Uniqueness of the weak solution -- Proof of Theorem \ref{thm:uniq}} \label{S:uniq}
Let $\e$ be from Assumption \ref{assum:Lip} and let $\nu \in (0,2/(1+\e))$ and $T>0$. Assume that $u_1(t,x)$ and $u_2(t,x)$ both weakly solve \eqref{eq:SPDE} and that
\begin{equation} \label{eq:bound-on-u_i}
  \Pro \left( \sup_{t \in [0,T]} \sup_{x \in \mathbb{R}^d} |u_i(t,x)|e^{-|x|^\nu}<+\infty \right) = 1, \ \ \ i \in \{1,2\}.
\end{equation}
Define the random variable
\begin{equation}
  M(\omega):= \max_{i \in \{1,2\}} \sup_{t \in [0,T]} \sup_{x \in \mathbb{R}^d} |u_i(t,x)|e^{-|x|^\nu}
\end{equation}
so that
\begin{equation} \label{eq:u-growth}
  u_i(t,x) \leq M e^{|x|^\nu} \ \ \ \text{ for } i \in \{1,2\}.
\end{equation}
 By the local Lipschitz continuity of $f$ and Assumption \ref{assum:Lip}, for any $t \in [0,T]$ and $x\in \mathbb{R}^d$ (and $\omega \in \Omega$),
\begin{align}
  &|f(u_1(t,x) - f(u_2(t,x))| \leq L(M e^{|x|^\nu}) |u_1(t,x) - u_2(t,x)|\nonumber\\
   &\leq C(1 + (\log(M) + |x|^\nu)^{1 + \e}) |u_1(t,x) - u_2(t,x)|.
\end{align}
Let $\nu_1:= \nu(1 + \e)$ so that by \eqref{eq:Lip-over-log}, the above expression can be written as
\begin{equation} \label{eq:f-bound}
  |f(u_1(t,x) - f(u_2(t,x))| \leq C(1 + |x|^{\nu_1}) |u_1(t,x) - u_2(t,x)|.
\end{equation}
where $C$ is a (random) number that depends on $M$ but does not depend on $t$ or $x$.
Choose $\nu_2 \in (\nu_1, 2)$. These constants are chosen so that $\nu < \nu_1 < \nu_2 < 2$.

For a constant $K>0$ to be defined later, define
\begin{equation} \label{eq:weighted-diff}
  \tilde{q}(t,x):= (u_1(t,x) - u_2(t,x))\exp\left(-(K + |x|^2)^{\frac{\nu_2}{2}}\right).
\end{equation}
Observe that because $\nu_2>\nu$ and because of \eqref{eq:bound-on-u_i},
\begin{equation}
  \int_{\mathbb{R}^d} |\tilde{q}(t,x)|^2dx< +\infty
\end{equation}
for all $t \in [0,T]$.
%We prove uniqueness by showing that $
%\tilde{q}=0$ in the $L^2(\mathbb{R}^d)$ norm, following the ideas of Marinelli and R\"ockner \cite{mr-2010}. If we can show that
%\begin{equation}
%  \int_{\mathbb{R}^d} |\tilde{q}(t,x)|^2 dx=0 \text{ with probability }1,
%\end{equation}
%then we have proven that $u_1(t,x) = u_2(t,x)$.

Notice that the exponent $(K + |x|^2)^{\frac{\nu_2}{2}}$ grows like $|x|^{\nu_2}$, but is twice differentiable in $x$. Because of the additive noise in \eqref{eq:SPDE}, the difference $u_1(t,x) - u_2(t,x)$ and the weighted difference $\tilde q$ are weakly differentiable in $t$ and $x$. The time derivative is (weakly)
\begin{align*}
  \frac{\partial \tilde q}{\partial t}(t,x) = \exp &\left(-(K + |x|^2)^{\frac{\nu_2}{2}} \right)
  \Bigg(\frac{1}{2}\Delta (u_1-u_2)(t,x)
  +(f(u_1(t,x)) - f(u_2(t,x)))  \Bigg)
\end{align*}
Let $\rho(x) := \exp \left((K + |x|^2)^{\frac{\nu_2}{2}} \right)$. This is different from the weight defined in the proof of existence.

Define the unbounded linear operator on $L^2(\mathbb{R}^d)$
\begin{equation}
  \mathscr{A} \phi(x) = \frac{1}{2} \frac{\Delta(\phi \rho)(x)}{\rho(x)} - \nu_2^2 (K + |x|^2)^{\nu_2 -1}\phi(x)
\end{equation}
The extra $- \nu_2^2 (K + |x|^2)^{\nu_2 -1}\phi(x)$ is added to ensure that $\mathscr{A}$ is dissipative in $L^2(\mathbb{R}^2)$ (see \cite[Chapter 1.4]{pazy}). Using this notation, $\tilde{q}$ weakly solves
\begin{equation} \label{eq:tilde-q-weak}
  \frac{\partial \tilde q}{\partial t}(t,x) = \mathscr{A} \tilde q(t,x) + \frac{f(u_1(t,x)) - f(u_2(t,x))}{\rho(x)} + \nu_2^2 (K+|x|^2)^{\nu_2 -1}\tilde{q}(t,x).
\end{equation}

The following lemma collects a result about integrating this operator $\mathscr{A}$ by parts. We present this without proof.
\begin{lemma} \label{lem:int-by-parts-general}
  Suppose that $\phi: \mathbb{R}^d \to \mathbb{R}$ is twice continuously differentiable with compact support. Suppose that $\rho$ and $\psi$ are twice continuously differentiable. Then
  \begin{align} \label{eq:int-by-parts-general}
    &\int_{\mathbb{R}^d} \Delta( \phi \rho)(x)\phi(x) \psi(x)dx \nonumber\\
    &= -\int_{\mathbb{R}^d} |\nabla \phi(x)|^2 \rho(x) \psi(x)dx  + \int_{\mathbb{R}^d} |\phi(x)|^2 \left(\frac{1}{2} \Delta(\rho \psi) -  \nabla \rho(x) \cdot \nabla \psi(x)\right)dx
  \end{align}
\end{lemma}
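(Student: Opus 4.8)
The statement is a purely computational integration-by-parts identity, so the plan is to reduce everything to two applications of the divergence theorem, using the compact support of $\phi$ to kill all boundary terms. First I would expand $\Delta(\phi\rho) = \phi\Delta\rho + 2\nabla\phi\cdot\nabla\rho + \rho\Delta\phi$, so that the left-hand side becomes
\begin{align*}
  \int_{\mathbb{R}^d}\Delta(\phi\rho)\,\phi\psi\,dx
  &= \int_{\mathbb{R}^d}|\phi|^2\psi\,\Delta\rho\,dx
   + 2\int_{\mathbb{R}^d}(\nabla\phi\cdot\nabla\rho)\,\phi\psi\,dx
   + \int_{\mathbb{R}^d}(\Delta\phi)\,\phi\rho\psi\,dx.
\end{align*}
The only two terms that need work are the last two.

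For the middle term I would use $(\nabla\phi\cdot\nabla\rho)\phi = \tfrac12\nabla(\phi^2)\cdot\nabla\rho$ and integrate by parts once (no boundary term, since $\phi$ has compact support) to get $-\tfrac12\int |\phi|^2\,\mathrm{div}(\psi\nabla\rho)\,dx = -\tfrac12\int|\phi|^2(\psi\Delta\rho + \nabla\rho\cdot\nabla\psi)\,dx$. For the last term, $\int (\Delta\phi)\phi\rho\psi\,dx$, I would integrate by parts once, writing $\int(\Delta\phi)(\phi\rho\psi)\,dx = -\int\nabla\phi\cdot\nabla(\phi\rho\psi)\,dx = -\int|\nabla\phi|^2\rho\psi\,dx - \int(\nabla\phi\cdot\nabla(\rho\psi))\phi\,dx$, and then treat the remaining cross term the same way as before: $(\nabla\phi\cdot\nabla(\rho\psi))\phi = \tfrac12\nabla(\phi^2)\cdot\nabla(\rho\psi)$, so integrating by parts gives $-\tfrac12\int|\phi|^2\Delta(\rho\psi)\,dx$. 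Collecting, the last term equals $-\int|\nabla\phi|^2\rho\psi\,dx + \tfrac12\int|\phi|^2\Delta(\rho\psi)\,dx$.

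Finally I would add the three pieces. The $|\nabla\phi|^2$ contribution is exactly $-\int|\nabla\phi|^2\rho\psi\,dx$, matching the first term on the right-hand side of \eqref{eq:int-by-parts-general}. For the $|\phi|^2$ coefficient I collect $\psi\Delta\rho$ (from the first term) $-\tfrac12\psi\Delta\rho - \tfrac12\nabla\rho\cdot\nabla\psi$ (from the middle term) $+\tfrac12\Delta(\rho\psi)$ (from the last term), which should simplify, using $\Delta(\rho\psi) = \psi\Delta\rho + 2\nabla\rho\cdot\nabla\psi + \rho\Delta\psi$, to $\tfrac12\Delta(\rho\psi) - \nabla\rho\cdot\nabla\psi$, matching the claimed right-hand side. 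There is no real obstacle here: the only point requiring care is bookkeeping of the cross-terms and the coefficient $\tfrac12$, and confirming that every integration by parts has a genuinely compactly supported integrand (which holds because each term carries a factor of $\phi$ or $\nabla\phi$), so no boundary terms appear. This is why the paper presents it without proof.
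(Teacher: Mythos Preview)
The paper states this lemma without proof, so there is nothing to compare your approach against; your expand-and-integrate-by-parts strategy is exactly the kind of direct computation the authors omit. The outline is sound and all boundary terms do vanish for the reason you give.

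There is, however, a bookkeeping slip in your handling of the middle term. After expanding $\Delta(\phi\rho)$ you have a coefficient $2$ on $\int(\nabla\phi\cdot\nabla\rho)\phi\psi\,dx$, so using $(\nabla\phi\cdot\nabla\rho)\phi=\tfrac12\nabla(\phi^2)\cdot\nabla\rho$ and integrating by parts yields
\[
  2\int(\nabla\phi\cdot\nabla\rho)\phi\psi\,dx
  = -\int|\phi|^2\bigl(\psi\Delta\rho+\nabla\rho\cdot\nabla\psi\bigr)\,dx,
\]
not the $-\tfrac12$ version you wrote. With this correction the $|\phi|^2$ coefficient collects as
\[
  \psi\Delta\rho \;-\;\psi\Delta\rho \;-\;\nabla\rho\cdot\nabla\psi \;+\;\tfrac12\Delta(\rho\psi)
  \;=\;\tfrac12\Delta(\rho\psi)-\nabla\rho\cdot\nabla\psi,
\]
which is the claimed right-hand side. (With your stated coefficients the final simplification you assert does not actually hold: you would get $\tfrac12\psi\Delta\rho-\tfrac12\nabla\rho\cdot\nabla\psi+\tfrac12\Delta(\rho\psi)$, which differs from the target by $\tfrac12\psi\Delta\rho+\tfrac12\nabla\rho\cdot\nabla\psi$.) This is exactly the ``coefficient $\tfrac12$'' bookkeeping you flagged; fix that one line and the proof is complete.
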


\begin{lemma} \label{lem:A-dissip}
  The operator $\mathscr{A}: D(\mathscr{A}) \subset L^2(\mathbb{R}^d) \to L^2(\mathbb{R}^d)$ is dissipative (see \cite[Chapter 1.4]{pazy} for a definition). Specifically, for any twice continuously differentiable $\phi$ with compact support,
  \begin{equation} \label{eq:A-dissip}
    \int_{\mathbb{R}^d} \mathscr{A} \phi(x) \phi(x)dx \leq 0.
  \end{equation}
\end{lemma}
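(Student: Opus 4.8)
The plan is to prove the dissipativity inequality \eqref{eq:A-dissip} by a direct integration-by-parts computation, applying Lemma \ref{lem:int-by-parts-general} with the specific choice of $\rho(x) = \exp\left((K+|x|^2)^{\nu_2/2}\right)$ and the weight $\psi \equiv 1$. First I would expand the left-hand side of \eqref{eq:A-dissip} using the definition of $\mathscr{A}$, splitting it as
\begin{equation*}
  \int_{\mathbb{R}^d} \mathscr{A}\phi(x)\phi(x)dx = \frac{1}{2}\int_{\mathbb{R}^d} \frac{\Delta(\phi\rho)(x)}{\rho(x)}\phi(x)dx - \nu_2^2\int_{\mathbb{R}^d}(K+|x|^2)^{\nu_2-1}|\phi(x)|^2dx.
\end{equation*}
For the first term, I would apply Lemma \ref{lem:int-by-parts-general} with $\psi(x) = 1/\rho(x)$, so that $\rho\psi \equiv 1$, $\Delta(\rho\psi) \equiv 0$, and $\nabla\rho \cdot \nabla\psi = \nabla\rho \cdot \nabla(1/\rho) = -|\nabla\rho|^2/\rho^2 \cdot (\text{something})$ — more carefully, $\nabla\psi = -\nabla\rho/\rho^2$, so $\nabla\rho \cdot \nabla\psi = -|\nabla\rho|^2/\rho^2$. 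This yields
\begin{equation*}
  \int_{\mathbb{R}^d}\Delta(\phi\rho)(x)\phi(x)\frac{1}{\rho(x)}dx = -\int_{\mathbb{R}^d}|\nabla\phi(x)|^2 dx + \int_{\mathbb{R}^d}|\phi(x)|^2\frac{|\nabla\rho(x)|^2}{\rho(x)^2}dx.
\end{equation*}

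Next I would compute $|\nabla\rho(x)|^2/\rho(x)^2$ explicitly. Writing $g(x) = (K+|x|^2)^{\nu_2/2}$ so that $\rho = e^g$, one has $\nabla\rho/\rho = \nabla g$, hence $|\nabla\rho|^2/\rho^2 = |\nabla g|^2$. Since $\nabla g(x) = \nu_2 (K+|x|^2)^{\nu_2/2 - 1} x$, we get $|\nabla g(x)|^2 = \nu_2^2(K+|x|^2)^{\nu_2 - 2}|x|^2 \leq \nu_2^2(K+|x|^2)^{\nu_2-1}$, using $|x|^2 \leq K + |x|^2$. Substituting back,
\begin{equation*}
  \int_{\mathbb{R}^d}\mathscr{A}\phi(x)\phi(x)dx = -\frac{1}{2}\int_{\mathbb{R}^d}|\nabla\phi(x)|^2 dx + \frac{1}{2}\int_{\mathbb{R}^d}|\phi(x)|^2|\nabla g(x)|^2 dx - \nu_2^2\int_{\mathbb{R}^d}(K+|x|^2)^{\nu_2-1}|\phi(x)|^2 dx.
\end{equation*}
The gradient term $-\frac12\int|\nabla\phi|^2$ is manifestly non-positive, and the bound $|\nabla g(x)|^2 \leq \nu_2^2(K+|x|^2)^{\nu_2-1}$ shows that $\frac12\int|\phi|^2|\nabla g|^2 - \nu_2^2\int(K+|x|^2)^{\nu_2-1}|\phi|^2 \leq \left(\frac12 - 1\right)\nu_2^2\int(K+|x|^2)^{\nu_2-1}|\phi|^2 \leq 0$ (in fact the coefficient $\nu_2^2$ rather than $\nu_2^2/2$ in the definition gives more than enough room). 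Hence the whole expression is $\leq 0$, which is \eqref{eq:A-dissip}.

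The computation is essentially routine; the only place that requires care is the bookkeeping in applying Lemma \ref{lem:int-by-parts-general} with the nonconstant weight $\psi = 1/\rho$, making sure the cross term $\nabla\rho\cdot\nabla\psi$ is handled with the correct sign, and checking that $\psi$ is indeed twice continuously differentiable (which holds since $\rho$ is smooth and strictly positive). I do not anticipate a genuine obstacle here — the constant $\nu_2^2$ was evidently chosen in the definition of $\mathscr{A}$ precisely so that the dissipativity margin is comfortable. If one wished to track the sharp constant one would note $|x|^2 = (K+|x|^2) - K \leq K + |x|^2$, so the inequality even leaves a surplus of $K\nu_2^2(K+|x|^2)^{\nu_2-2}|\phi|^2$ plus the full $\frac{\nu_2^2}{2}(K+|x|^2)^{\nu_2-1}|\phi|^2$ to spare, but this is not needed for the statement.
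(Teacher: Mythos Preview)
Your proof is correct and follows essentially the same route as the paper: both apply Lemma \ref{lem:int-by-parts-general} with $\psi = 1/\rho = e^{-(K+|x|^2)^{\nu_2/2}}$, compute $-\nabla\rho\cdot\nabla\psi = |\nabla g|^2 = \nu_2^2|x|^2(K+|x|^2)^{\nu_2-2} \leq \nu_2^2(K+|x|^2)^{\nu_2-1}$, and conclude $\int\mathscr{A}\phi\,\phi \leq -\tfrac{1}{2}\int|\nabla\phi|^2 \leq 0$. (Your opening line says ``$\psi\equiv 1$'', which is a slip---the actual computation you carry out uses $\psi = 1/\rho$, which is the right choice and matches the paper.)
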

\begin{proof}
 If $\phi$ is twice differentiable with compact support, and we apply Lemma \ref{eq:int-by-parts-general} with $\rho(x) = e^{(K+|x|^2)^{\nu_2/2}}$ and $\psi(x) = e^{-(K+|x|^2)^{\nu_2/2}}$, then
\begin{align} \label{eq:int-by-parts}
  &\int_{\mathbb{R}^d} \mathscr{A} \phi(x) \phi(x)dx  \nonumber\\
  &=  -\frac{1}{2}\int_{\mathbb{R}^d} |\nabla \phi(x)|^2 \rho(x) \psi(x)dx  + \frac{1}{2}\int_{\mathbb{R}^d} |\phi(x)|^2 (\frac{1}{2} \Delta(\rho \psi) -  \nabla \rho(x) \cdot \nabla \psi(x))dx\nonumber\\
  & - \nu_2^2\int_{\mathbb{R}^d} (K+|x|^2)^{\frac{\nu_2}{2}}|\phi(x)|^2dx
\end{align}
Direct calculations show that
\begin{equation}
  -\nabla \rho(x)\cdot \nabla \psi(x) = \nu_2^2|x|^2(K+|x|^2)^{\nu_2 -2} \leq \nu_2^2 (K+|x|^2)^{\nu_2 -1},
\end{equation}
\begin{equation}
  \rho(x)\psi(x) \equiv 1,
\end{equation}
and
\begin{equation}
  \Delta(\rho \psi) = \Delta 1 = 0.
\end{equation}
Therefore
\begin{equation} \label{eq:negative-type}
  \int_{\mathbb{R}^d} \mathscr{A} \phi(x) \phi(x)dx \leq - \frac{1}{2}\int_{\mathbb{R}^d} |\nabla \phi(x)|^2 dx \leq 0.
\end{equation}
\end{proof}

While we cannot guarantee that $\tilde{q}$ is a strong solution to \eqref{eq:tilde-q-weak}, we can regularize the process using resolvent operators.
Define the resolvent operators $R(\lambda) = ( \lambda I - \mathscr{A})^{-1}$. Because $\mathscr{A}$ is dissipative, $R(\lambda)$ are bounded linear operators on $L^2(\mathbb{R}^d)$ (see Theorem 1.4.2 of \cite{pazy}) with the properties that (see Chapter 1.3 of \cite{pazy}) for any $\phi \in L^2(\mathbb{R}^d)$,
\begin{align}
  &|R(\lambda)\phi|_{L^2(\mathbb{R}^d)} \leq \frac{1}{\lambda}|\phi|_{L^2(\mathbb{R}^d)}.
  &\lim_{\lambda \to \infty} |\lambda R(\lambda) \phi - \phi|_{L^2(\mathbb{R}^d)} = 0.
\end{align}
Define
\begin{equation}
  \tilde{q}_\lambda(t,x) = \lambda R(\lambda) q(t,\cdot)(x).
\end{equation}
This approximation is a strong solution to
\begin{equation} \label{eq:tilde-q-strong}
  \frac{\partial \tilde q_\lambda}{\partial t}(t,x) = \mathscr{A} \tilde q_\lambda(t,x) + F_\lambda(t,x)
\end{equation}
where
\begin{equation}
  F(t,x) = (f(u_1(t,x)) - f(u_2(t,x)))e^{-(K+|x|^2)^{\nu_2/2}} +\nu_2^2(K+|x|^2)^{\nu_2 -1}\tilde{q}(t,x)
\end{equation}
and
\begin{equation}
  F_\lambda(t,x) = \lambda (R(\lambda) F(t,\cdot))(x)
\end{equation}
Notice that for any $t>0$, $F(t,\cdot) \in L^2(\mathbb{R}^d)$ because \eqref{eq:f-bound}, \eqref{eq:bound-on-u_i} and the choice of $\nu< \nu_1 < \nu_2$,
\begin{equation} \label{eq:F-bound}
  |F(t,x)| \leq C \left((1+|x|^{\nu_1}) + (K+|x|^2)^{\nu_2 - 1} \right) \exp \left( - (K+|x|^2)^{\nu_2/2} +|x|^\nu\right).
\end{equation}
Furthermore,
\begin{align} \label{eq:resolvent-converge1}
  &\lim_{\lambda \to +\infty}  \int_{\mathbb{R}^d} |\tilde{q}(t,x) - \tilde{q}_\lambda(t,x)|^2dx = 0 \ \ \text{ and }\\
  &\lim_{\lambda \to +\infty}  \int_{\mathbb{R}^d} |F(t,x) - F_\lambda(t,x)|^2dx =0.\label{eq:resolvent-converge2}
\end{align}

Finally, we multiply $|\tilde{q}_\lambda(t,x)|^2$ by $\exp \left( -t (K+|x|^2)^{\frac{\nu_2}{2}} \right)$ use the fact that $\tilde{q}_\lambda$ is a strong solution to \eqref{eq:tilde-q-strong} to calculate that
\begin{align} \label{eq:weighted-q-lambda-pde}
  &\frac{d}{dt} \frac{1}{2} \int_{\mathbb{R}^d} |\tilde{q}_\lambda(t,x)|^2 \exp\left( -t (K+|x|^2)^{\nu_2/2} \right)dx \nonumber\\
  &= \int_{\mathbb{R}^d} \mathscr{A}\tilde{q}_\lambda(t,x) \tilde{q}_\lambda(t,x)\exp\left( -t (K+|x|^2)^{\nu_2/2} \right)dx\nonumber\\
  &\qquad + \int_{\mathbb{R}^d} F_\lambda(t,x)\tilde{q}_\lambda(t,x)\exp\left( -t (K+|x|^2)^{\nu_2/2} \right)dx \nonumber\\
  &\qquad - \frac{1}{2} \int_{\mathbb{R}^d} (K+|x|^2)^{\nu_2/2} |\tilde{q}_\lambda(t,x)|^2\exp\left( -t (K+|x|^2)^{\nu_2/2} \right)dx \nonumber\\
  &=: I_1(t) + I_2(t)  + I_3(t).
\end{align}
The most difficult term to analyze is $I_1(t)$. By the definition of $\mathscr{A}$ and Lemma \ref{lem:int-by-parts-general} with $\rho(x) = \exp\left( - (K+|x|^2)^{\nu_2/2}\right)$ and $\psi(t,x) = \exp\left( -(1+t) (K+|x|^2)^{\nu_2/2}\right)$
\begin{align} \label{eq:I1}
  &I_1(t) = \int_{\mathbb{R}^d} \mathscr{A} \tilde{q}_\lambda(t,x) \tilde{q}_\lambda(t,x)\exp\left( -t (K+|x|^2)^{\nu_2/2}\right)\nonumber\\
  &= \int_{\mathbb{R}^d} \frac{1}{2}\Delta (\tilde{q}_\lambda(t,x) \rho(x)) \tilde{q}_\lambda(t,x)\psi(t,x) \nonumber\\
    &\qquad- \frac{1}{2}\int_{\mathbb{R}^d} \nu_2^2 (K+|x|^2)^{\frac{\nu_2}{2}-1} |\tilde{q_\lambda}(t,x)|^2 \exp\left( -t (K+|x|^2)^{\nu_2/2} \right) \nonumber\\
  &\leq -\frac{1}{2}\int_{\mathbb{R}^d} |\nabla \tilde{q}_\lambda(t,x)|^2 \exp\left(-t(K+1|x|^2)^{\frac{\nu_2}{2}}\right)dx \nonumber \\
        &\qquad+\frac{1}{2} \int_{\mathbb{R}^d} |\tilde {q}_\lambda(t,x)|^2  \Bigg((t + t^2/2) \nu_2^2 (K+|x|^2)^{\nu_2-1} \nonumber\\
        &\qquad\qquad\qquad+ t\nu(\nu+d-2)(K+|x|^2)^{\frac{\nu_2}{2}-1}\Bigg)\exp\left(-t(K+1|x|^2)^{\frac{\nu_2}{2}}\right)dx.
\end{align}
Now, for $I_1$, $I_2$, and $I_3$, we use \eqref{eq:resolvent-converge1}--\eqref{eq:resolvent-converge2} to see that
\begin{align}
  &\limsup_{\lambda \to \infty} I_1(t) \nonumber\\
  &\leq
  \frac{1}{2} \int_{\mathbb{R}^d} |\tilde {q}(t,x)|^2  \Bigg((t + t^2/2) \nu_2^2 (K+|x|^2)^{\nu_2-1} \nonumber\\
  &\qquad\qquad+ t\nu(\nu+d-2)(K+|x|^2)^{\frac{\nu_2}{2}-1}\Bigg)\exp\left(-t(K+1|x|^2)^{\frac{\nu_2}{2}}\right)dx.
  %\frac{1}{2} \int_{\mathbb{R}^d} |\tilde {q}(t,x)|^2  (t d\nu_2    + t^2\nu_2^2)(K+|x|^2)^{\frac{\nu_2}{2} -1}\exp\left(-t(K+1|x|^2)^{\frac{\nu_2}{2}}\right)dx,
\end{align}
\begin{equation}
  \limsup_{\lambda \to \infty} I_2(t) = \int_{\mathbb{R}^d} F(t,x)\tilde{q}(t,x)\exp\left( -t (K+|x|^2)^{\nu_2/2} \right)dx
\end{equation}
and
\begin{equation}
  \limsup_{\lambda \to \infty} I_3(t) = - \frac{1}{2}\int_{\mathbb{R}^d} (K+|x|^2)^{\nu_2/2} |\tilde{q}(t,x)|^2\exp\left( -t (K+|x|^2)^{\nu_2/2} \right)dx.
\end{equation}
By \eqref{eq:F-bound}
\begin{equation}
  \limsup_{\lambda \to \infty}I_2(t) \leq \int_{\mathbb{R}^d} (C(1 + |x|^{\nu_1}) + \nu_2^2(K+|x|^2)^{\nu_2 -1}) |\tilde{q}(t,x)|^2 \exp\left( -t (K+|x|^2)^{\nu_2/2}\right)dx.
\end{equation}
By first integrating \eqref{eq:weighted-q-lambda-pde} in time and then taking the limit as $\lambda \to +\infty$ we can conclude that for a fixed $T>0$
\begin{align}
  &\int_{\mathbb{R}^d} |\tilde{q}(T,x)|^2 \exp \left(- T (K+|x|^2)^{\frac{\nu_2}{2}} \right)dx \nonumber\\
  &\leq \int_0^T \int_{\mathbb{R}^d} \Bigg( C(1 + |x|^{\nu_1}) + \nu_2^2(1 + t+ t^2/2)(K+|x|^2)^{\nu_2 -1} \nonumber\\
  &\qquad\qquad\qquad  + t\nu(\nu+d-2)(K+|x|^2)^{\frac{\nu_2}{2} -1}  \nonumber\\
   &\qquad\qquad\qquad - \frac{1}{2}(K+|x|^2)^{\frac{\nu_2}{2}}\Bigg)|\tilde{q}(t,x)|^2 \exp\left( -t(K+|x|^2)^{\frac{\nu_2}{2}} \right) dxdt.
\end{align}
Recall that $\nu_1<\nu_2<2$. In particular, this means that $\nu_2-1 < \frac{\nu_2}{2}$ and $\frac{\nu_2}{2}-1 < \frac{\nu_2}{2}$. Therefore, if $K>0$ is chosen large enough (in a way that depends on $M$, $C$ and $T$), then the right-hand side of the above display is non-positive. This is enough to guarantee that for any $t \in [0,T]$ \begin{equation}
  \int_{\mathbb{R}^d} |\tilde{q}(t,x)|^2 \exp \left( -t (K+|x|^2)^{\frac{\nu_2}{2}} \right)dx = 0.
\end{equation}
Because $T>0$ is arbitrary, this implies that $u_1(t,x) = u_2(t,x)$ for all $t>0$ and $x \in \mathbb{R}^d$.

\begin{appendix}
\section{Appendix: Approximation of weakly differentiable weighted functions}
%In the proofs of Theorem \ref{thm:main} and Theorem \ref{thm:uniq}, we need to approximate weakly differentiable functions by strongly differentiable functions. In both cases, the solutions are regularized using Yosida approximations of differentiable operators of negative type. In the existence proofs, this regularization is done in the space $C_0(\mathbb{R}^d)$ of continuous functions vanishing near infinity. In the uniqueness proof, the regularization is done in the space $L^(\mathbb{R}^d)$ of square integrable functions. We collect some important results about these regularization below.
%%\subsection{Resolvent operators in $C_0(\mathbb{R}^d)$}
%
%\subsection{Yosida approximation of bounded functions}
Let $C_b(\mathbb{R}^d)$ denote the space of bounded continuous functions from $\mathbb{R}^d \to \mathbb{R}$. Let $G(t,x)$ be the heat kernel defined in \eqref{eq:heat-kernel}.
Define for $\lambda>0$ the resolvent operator $R(\lambda):C_b(\mathbb{R}^d) \to C_b(\mathbb{R}^d)$
\begin{equation}
  R(\lambda) \phi(x) = \int_0^\infty e^{-\lambda r} \int_{\mathbb{R}^d} G(r,x-y)\phi(y)dydr.
\end{equation}
Because $\phi$ is assumed to be uniformly bounded, $R(\lambda)$ is a bounded linear operator with norm $\frac{1}{\lambda}$. Furthermore, $R(\lambda)\phi$ is twice differentiable and
\begin{equation}
  \frac{1}{2}\Delta R(\lambda) \phi(x) = \lambda R(\lambda)\phi(x) - \phi(x).
\end{equation}
In this way, $R(\lambda) = \left(\lambda - \frac{1}{2}\Delta \right)^{-1}$.

Unlike in classical Hille-Yosida theory (see \cite[Chapter 1.3]{pazy}), the domain of $\frac{1}{2}\Delta$ is not dense in $C_b(\mathbb{R}^d)$. Despite this, we still have a weak Hille-Yosida theory in the sense of Cerrai \cite{c-1994}.
For any compact set $K \subset \mathbb{R}^d$,
\begin{equation}
  \lim_{\lambda \to +\infty} \sup_{x \in K}|\lambda R(\lambda) \phi(x) - \phi(x)| =0,
\end{equation}
but convergence is not necessarily uniform over the whole space. See \cite{c-1994} for an example. This is due to the fact that $S(t): C_b(\mathbb{R}^d) \to C_b(\mathbb{R}^d)$ defined by $S(t)\phi(x) = \int_{\mathbb{R}^d} G(t,x-y)\phi(y)dy$ is a weakly continuous semigroup, but not a $C_0$ semigroup.

The following proposition shows that we can approximate weak solutions to PDEs by strong solutions to PDEs.
\begin{proposition} \label{prop:WLOG-strong}
Assume that $v(t,x)$ is a bounded weak solution to the PDE
\begin{equation} \label{eq:weak-PDE}
  \frac{\partial v}{\partial t}(t,x) = \frac{1}{2}\Delta v(t,x) + \varphi(t,x,v(t,x))
\end{equation}
where $\varphi$ is uniformly bounded and Lipschitz continuous in its third variable in the sense that there exists $L>0$ such that
\begin{equation}
  \sup_{t \geq 0} \sup_{x \in \mathbb{R}^d} |\varphi(t,x,v_1) - \varphi(t,x,v_2)| \leq L|v_1 - v_2|.
\end{equation}
Then there exists a sequence $v_\lambda(t,x)$ of strong solutions to the PDE
\begin{equation} \label{eq:strong-PDE}
  \frac{\partial v_\lambda}{\partial t}(t,x) = \frac{1}{2}\Delta v_\lambda(t,x) + \varphi(t,x,v_\lambda(t,x)) + \delta_\lambda(t,x)
\end{equation}
where for any compact $K \in \mathbb{R}^d$,
\begin{equation}
  \lim_{\lambda \to \infty} \sup_{x \in K} |v_\lambda(t,x) - v(t,x)| = 0
\end{equation}
and the remainder
\begin{equation}
  \lim_{\lambda \to \infty} \sup_{x \in K} |\delta_\lambda(t,x)| = 0.
\end{equation}
\end{proposition}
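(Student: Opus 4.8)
The plan is to carry out the standard resolvent-regularization argument (compare Theorem 7.7 of \cite{dpz-book} and Proposition 6.2.2 of \cite{cerrai-book}), working with the weakly continuous heat semigroup $S(t)\phi(x) = \int_{\mathbb{R}^d}G(t,x-y)\phi(y)\,dy$ on $C_b(\mathbb{R}^d)$ and the resolvent $R(\lambda)$ in the sense of Cerrai \cite{c-1994}. First I would pass to the mild (variation-of-constants) form of the equation: with $\Phi(s,\cdot) := \varphi(s,\cdot,v(s,\cdot))$, a bounded continuous function (continuity following from the continuity of $\varphi$ and of $v$), a bounded weak solution of \eqref{eq:weak-PDE} satisfies
\begin{equation*}
  v(t,\cdot) = S(t)v(0,\cdot) + \int_0^t S(t-s)\Phi(s,\cdot)\,ds ,
\end{equation*}
and the smoothing of $S(t)$ against the bounded forcing $\Phi$ shows in particular that $t\mapsto v(t,\cdot)$ is continuous, uniformly on compact subsets of $\mathbb{R}^d$.

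The regularized objects are $v_\lambda(t,\cdot) := \lambda R(\lambda)v(t,\cdot)$. Since $R(\lambda)=\int_0^\infty e^{-\lambda r}S(r)\,dr$ commutes with every $S(t)$ (the semigroup property gives $S(t)S(r)=S(t+r)$), applying $\lambda R(\lambda)$ to the mild identity yields
\begin{equation*}
  v_\lambda(t,\cdot) = S(t)\bigl(\lambda R(\lambda)v(0,\cdot)\bigr) + \int_0^t S(t-s)\bigl(\lambda R(\lambda)\Phi(s,\cdot)\bigr)\,ds ,
\end{equation*}
so $v_\lambda$ is the mild solution of $\frac{\partial v_\lambda}{\partial t} = \frac{1}{2}\Delta v_\lambda + \lambda R(\lambda)\Phi(t,\cdot)$. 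Because $R(\lambda)$ maps $C_b(\mathbb{R}^d)$ into twice continuously differentiable functions with $\frac{1}{2}\Delta R(\lambda)\phi = \lambda R(\lambda)\phi - \phi$, one has $\frac{1}{2}\Delta v_\lambda(t,\cdot) = \lambda v_\lambda(t,\cdot)-\lambda v(t,\cdot)$, which is continuous in $t$; a standard parabolic-regularity argument then upgrades $v_\lambda$ to a strong solution, and rewriting the forcing gives \eqref{eq:strong-PDE} with
\begin{equation*}
  \delta_\lambda(t,\cdot) := \lambda R(\lambda)\Phi(t,\cdot) - \varphi\bigl(t,\cdot,v_\lambda(t,\cdot)\bigr).
\end{equation*}

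It then remains to prove the two convergence statements. For $v_\lambda\to v$ I would invoke the weak Hille--Yosida theory of \cite{c-1994}: for every $\psi\in C_b(\mathbb{R}^d)$ and every compact $K$, $\sup_{x\in K}|\lambda R(\lambda)\psi(x)-\psi(x)|\to 0$; applying this with $\psi=v(t,\cdot)$ for fixed $t$ gives $\sup_{x\in K}|v_\lambda(t,x)-v(t,x)|\to 0$. For $\delta_\lambda\to 0$ I would split
\begin{equation*}
  \delta_\lambda(t,x) = \bigl[\lambda R(\lambda)\Phi(t,\cdot)(x)-\Phi(t,x)\bigr] + \bigl[\varphi(t,x,v(t,x))-\varphi(t,x,v_\lambda(t,x))\bigr] ;
\end{equation*}
the first bracket tends to $0$ uniformly on $K$ by \cite{c-1994} applied to $\psi=\Phi(t,\cdot)$, and the second is dominated by $L\,|v(t,x)-v_\lambda(t,x)|$, which tends to $0$ uniformly on $K$ by the preceding step.

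The step I expect to be the main obstacle is purely a regularity-bookkeeping one: verifying that the mild solution $v_\lambda$ is genuinely \emph{strongly} differentiable in $t$ (and twice continuously differentiable in $x$), so that the maximum-principle arguments of Lemma \ref{lem:derivative} and Lemma \ref{lem:determ-existence-bounded} can be applied to it. This rests on (i) the continuity of $t\mapsto v(t,\cdot)$, hence of $t\mapsto\Phi(t,\cdot)$ and of the forcing $\lambda R(\lambda)\Phi(t,\cdot)$; (ii) the fact that $\lambda R(\lambda)v(0,\cdot)$ lies in the domain of $\frac{1}{2}\Delta$; and (iii) the identity $\frac{1}{2}\Delta R(\lambda)\phi=\lambda R(\lambda)\phi-\phi$, which keeps $\Delta v_\lambda$ under pointwise control throughout. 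Finally, the non-density of the domain of $\frac{1}{2}\Delta$ in $C_b(\mathbb{R}^d)$ is exactly what prevents the convergences above from being uniform over all of $\mathbb{R}^d$, which is why the statement is phrased with suprema over compact sets $K$.
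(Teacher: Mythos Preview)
Your proposal is correct and follows essentially the same route as the paper: define $v_\lambda=\lambda R(\lambda)v$, identify the remainder as $\delta_\lambda=\lambda R(\lambda)\varphi(t,\cdot,v(t,\cdot))-\varphi(t,\cdot,v_\lambda(t,\cdot))$, and use Cerrai's weak Hille--Yosida convergence together with the Lipschitz bound on $\varphi$ to send both $v_\lambda-v$ and $\delta_\lambda$ to zero uniformly on compacts. Your extra step of passing through the mild formulation and the commutation $S(t)R(\lambda)=R(\lambda)S(t)$ is a reasonable way to substantiate the strong differentiability that the paper simply asserts.
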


\begin{proof}
  Define $v_\lambda(t,x) = \lambda R(\lambda)v(t,x)$. Then $v_\lambda$ is strongly differentiable and strongly solves
  \begin{equation} \label{eq:strong v}
    \frac{\partial v_\lambda}{\partial t}(t,x) = \frac{1}{2}\Delta v_\lambda(t,x) + \varphi(t,x,v_\lambda(t,x)) + \delta_\lambda(t,x).
  \end{equation}
  In the above expression
  \begin{equation}
    \delta_\lambda(t,x) = \lambda R(\lambda) \varphi(t,\cdot,v(t,\cdot))(x) - \varphi(t,x,v_\lambda(t,x)).
  \end{equation}
  Because of the convergence properties of $\lambda R(\lambda)$
  and because of the Lipschitz continuity of $\varphi$, for any compact set $K \subset \mathbb{R}^d$,
  \begin{equation}
    \lim_{\lambda \to \infty}\sup_{x \in K} |\delta_\lambda(t,x)| = 0.
  \end{equation}
  Furthermore, $\delta_\lambda$ is uniformly bounded because $\varphi$ is uniformly bounded.
\end{proof}

The biggest disadvantage of Proposition \ref{prop:WLOG-strong} is that the convergence is only uniform over compact subsets of $\mathbb{R}^d$. If we weight our solutions, however, then the convergence becomes uniform.

\begin{corollary} \label{cor:WLOG-strong}
  Let $v(t,x)$ be  a bounded weak solution to \eqref{eq:weak-PDE}.
  Let $\rho: [0,+\infty) \times \mathbb{R}^d \to [1,+\infty)$ be a twice differentiable weight function satisfying $\lim_{|x| \to \infty} \rho(t,x) = +\infty$ and define the quotient
  \begin{equation}
    r(t,x) : = \frac{ v(t,x)}{\rho(t,x)}.
  \end{equation}
  There exists a sequence $r_\lambda(t,x)$ of strongly differentiable processes solving the PDE
  \begin{align}\label{eq:strong-quotient-PDE}
    \frac{\partial r_\lambda}{\partial t}(t,x) = &\frac{1}{2}\Delta r_\lambda(t,x) + \nabla r_\lambda(t,x) \cdot \frac{\nabla \rho(t,x)}{\rho(t,x)} + \frac{1}{2}\frac{r_\lambda(t,x) \Delta \rho(t,x)}{\rho(t,x)}\nonumber\\
    &+ \frac{ \varphi(t,x, r_\lambda(t,x) \rho(t,x)) }{\rho(t,x)} - \frac{r_\lambda(t,x) \frac{\partial \rho}{\partial t}(t,x)}{\rho(t,x)} + \frac{\delta_\lambda(t,x)}{\rho(t,x)}.
  \end{align}
  such that
  \begin{equation}
    \lim_{\lambda \to \infty} \sup_{x \in \mathbb{R}^d}|r_\lambda(t,x) - r(t,x)| = 0
  \end{equation}
  and
  \begin{equation}
    \lim_{\lambda \to \infty} \sup_{x \in \mathbb{R}^d} \left| \frac{\delta_\lambda(t,x)}{\rho(t,x)} \right| = 0.
  \end{equation}
\end{corollary}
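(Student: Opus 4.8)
The plan is to reuse the strong approximation already constructed in Proposition \ref{prop:WLOG-strong}: set $v_\lambda(t,x) := \lambda R(\lambda) v(t,\cdot)(x)$, which is strongly differentiable in $t$, twice strongly differentiable in $x$, and strongly solves $\partial_t v_\lambda = \tfrac12 \Delta v_\lambda + \varphi(t,x,v_\lambda) + \delta_\lambda$ with $\delta_\lambda(t,x) = \lambda R(\lambda)\varphi(t,\cdot,v(t,\cdot))(x) - \varphi(t,x,v_\lambda(t,x))$. Then simply define $r_\lambda(t,x) := v_\lambda(t,x)/\rho(t,x)$. Since $\rho$ is twice differentiable in $x$, has a time derivative, and is bounded below by $1$, the quotient is strongly differentiable and no degeneracy can occur in any of the terms below.

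Next I would derive \eqref{eq:strong-quotient-PDE}. Writing $v_\lambda = r_\lambda\rho$ and using the Leibniz rules
\[ \partial_t v_\lambda = \rho\,\partial_t r_\lambda + r_\lambda\,\partial_t\rho, \qquad \Delta v_\lambda = \rho\,\Delta r_\lambda + 2\nabla r_\lambda\cdot\nabla\rho + r_\lambda\,\Delta\rho, \]
I substitute these into the strong equation for $v_\lambda$, divide through by $\rho$, solve for $\partial_t r_\lambda$, and rewrite $\varphi(t,x,v_\lambda)$ as $\varphi(t,x,r_\lambda(t,x)\rho(t,x))$. Collecting terms gives
\[ \partial_t r_\lambda = \tfrac12\Delta r_\lambda + \nabla r_\lambda\cdot\tfrac{\nabla\rho}{\rho} + \tfrac12\tfrac{r_\lambda\,\Delta\rho}{\rho} + \tfrac{\varphi(t,x,r_\lambda\rho)}{\rho} - \tfrac{r_\lambda\,\partial_t\rho}{\rho} + \tfrac{\delta_\lambda}{\rho}, \]
which is exactly \eqref{eq:strong-quotient-PDE}; this step is pure bookkeeping.

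The substantive point is upgrading the merely locally uniform convergence coming from Proposition \ref{prop:WLOG-strong} to convergence uniform over all of $\R^d$, and this is precisely where $\lim_{|x|\to\infty}\rho(t,x) = +\infty$ is used. Let $V := \sup_{t,x}|v(t,x)| < +\infty$. Since $\lambda R(\lambda)$ is a contraction for the supremum norm, $v_\lambda$ also satisfies $\sup_{t,x}|v_\lambda(t,x)|\le V$, so $|r_\lambda(t,x) - r(t,x)| = |v_\lambda(t,x) - v(t,x)|/\rho(t,x) \le 2V/\rho(t,x)$. Fix $t$ and $\e>0$; using $\rho(t,x)\to+\infty$ choose $R$ with $\rho(t,x) > 2V/\e$ for $|x|>R$, so the bound is $\le\e$ outside the ball $B_R$. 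On the compact set $\overline{B_R}$, Proposition \ref{prop:WLOG-strong} gives $\sup_{|x|\le R}|v_\lambda - v|\to 0$, and since $\rho\ge1$ we get $|r_\lambda - r|\le|v_\lambda - v|$ there; combining the two regions yields $\sup_{x\in\R^d}|r_\lambda(t,x)-r(t,x)|\to 0$. The identical two-region argument applied to $\delta_\lambda/\rho$ — using that $\delta_\lambda$ is uniformly bounded because $\varphi$ is, and that $\sup_{|x|\le R}|\delta_\lambda|\to 0$ again by Proposition \ref{prop:WLOG-strong} — gives $\sup_{x\in\R^d}|\delta_\lambda(t,x)/\rho(t,x)|\to 0$.

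The only real obstacle here is organizational rather than conceptual: one must confirm that $\rho$ carries the derivatives $\partial_t\rho$, $\nabla\rho$, $\Delta\rho$ appearing in \eqref{eq:strong-quotient-PDE} and stays $\ge1$ so that no term in the quotient PDE degenerates, and one must be careful collecting coefficients in the Leibniz computation. The conceptual content — that compact-uniform convergence together with a weight blowing up at infinity yields globally uniform convergence of the ratios — is exactly the trade-off flagged just before the corollary, and is entirely dispatched by the $\e$-splitting above.
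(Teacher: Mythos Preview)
Your proposal is correct and follows essentially the same approach as the paper: define $r_\lambda = v_\lambda/\rho$ with $v_\lambda$ from Proposition~\ref{prop:WLOG-strong}, derive \eqref{eq:strong-quotient-PDE} by the Leibniz rule, and upgrade compact-uniform convergence to global uniform convergence using the uniform boundedness of $v_\lambda$ and $\delta_\lambda$ together with $\rho(t,x)\to\infty$. Your $\e$-splitting argument simply makes explicit the step the paper summarizes in one line.
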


\begin{proof}
Let $v_\lambda$ be from Proposition \ref{prop:WLOG-strong} and define the quotient
\begin{equation}
  r_\lambda(t,x): = \frac{v_\lambda(t,x)}{\rho(t,x)}.
\end{equation}
Because of \eqref{eq:strong v}, $r_\lambda$ strongly solves
\begin{align}
  \frac{\partial r_\lambda}{\partial t}(t,x) = \frac{1}{2}\frac{\Delta v_\lambda(t,x)}{\rho(t,x)} + \frac{\phi(t,x,v_\lambda(t,x))}{\rho(t,x)} - \frac{v_\lambda(t,x) \frac{\partial \rho}{\partial t}(t,x)}{(\rho(t,x))^2} + \frac{\delta_\lambda(t,x)}{\rho(t,x)}.
\end{align}

Then because $v_\lambda (t,x) = r_\lambda(t,x)\rho(t,x)$ and
\begin{equation}
  \frac{1}{2}\frac{\Delta v_\lambda(t,x)}{\rho(t,x)} = \frac{1}{2}\Delta r_\lambda(t,x) + \nabla r_\lambda(t,x) \cdot \frac{\nabla \rho(t,x)}{\rho(t,x)} + \frac{1}{2}\frac{r_\lambda(t,x) \Delta \rho(t,x)}{\rho(t,x)}
\end{equation}
it follows that $r_\lambda$ strongly solves
\begin{align}
  \frac{\partial r_\lambda}{\partial t}(t,x) = &\frac{1}{2}\Delta r_\lambda(t,x) + \nabla r_\lambda(t,x) \cdot \frac{\nabla \rho(t,x)}{\rho(t,x)} + \frac{1}{2} \frac{r_\lambda(t,x) \Delta \rho(t,x)}{\rho(t,x)}\nonumber\\
   &+ \frac{ \varphi(t,x, r_\lambda(t,x) \rho(t,x)) }{\rho(t,x)} - \frac{r_\lambda(t,x) \frac{\partial \rho}{\partial t}(t,x)}{\rho(t,x)} + \frac{\delta_\lambda(t,x)}{\rho(t,x)}.
\end{align}

Because $\delta_\lambda(t,x)$ is uniformly bounded and converges to zero uniformly on compact sets and because $\rho(t,x)$ converges to $\infty$
the remainder
\begin{equation}
  \lim_{\lambda \to \infty}\sup_{x \in \mathbb{R}^d}\frac{|\delta_\lambda(t,x)|}{\rho(t,x)} = 0
\end{equation}
for any $t>0$.

Similarly, $v_\lambda(t,x)$ converges to $v(t,x)$ uniformly over compact sets and are uniformly bounded. Therefore,
\begin{equation}
  \lim_{\lambda \to \infty}\sup_{x \in \mathbb{R}^d}|r_\lambda(t,x) - r(t,x)| = 0.
\end{equation}
\end{proof}

\section{Upper-left derivative of a supremum}
Define the upper-left derivative of a real-valued function $y: \mathbb{R} \to \mathbb{R}$ by
\begin{equation}
  \frac{d^-}{dt} y(t) := \limsup_{h \downarrow 0} \frac{y(t) - y(t-h)}{h}.
\end{equation}

Recall the definition of the function space $C_0$ from \eqref{eq:C_0-def}.

The next proposition is a generalization of Proposition D.4 in the appendix of \cite{dpz-book}.
\begin{proposition} \label{prop:norm-diff}
  Assume that $v: [0,+\infty)\times \mathbb{R}^d \to \mathbb{R}$ is differentiable in the first variable and that $v(t,\cdot) \in C_0(\mathbb{R}^d)$ for all $t>0$, then the real-valued function $t \mapsto \sup_{x \in \mathbb{R}^d} v(t,x)$ has a bounded upper-left-derivative  whenever $\sup_{x \in \mathbb{R}^d} v(t,x)>0$. Furthermore, for any maximizer $x_t \in \mathbb{R}^d$ such that
  \[v(t,x_t) = \sup_{x \in \mathbb{R}^d} v(t,x), \]
  \begin{equation} \label{eq:norm-diff}
    \frac{d^-}{dt} \sup_{x \in \mathbb{R}^d} v(t,x) \leq \frac{\partial v}{\partial t}(t,x_t)
  \end{equation}
\end{proposition}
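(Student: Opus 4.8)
The plan is to reduce everything to the scalar function $g(t) := \sup_{x \in \R^d} v(t,x)$ and to exploit crucially that, since $v(t,\cdot) \in C_0(\R^d)$ (see \eqref{eq:C_0-def}), whenever $g(t)>0$ the supremum defining $g(t)$ is actually attained. Indeed, because $v(t,x) \to 0$ as $|x| \to \infty$, the set $\{x \in \R^d : v(t,x) \ge g(t)/2\}$ is nonempty and bounded, hence compact, and on it the continuous function $v(t,\cdot)$ attains its maximum; so there is some $x_t \in \R^d$ with $v(t,x_t) = g(t)$. This attainment is the only role played by the $C_0$ hypothesis, and it is exactly what supplies the comparison point needed below; note $x_t$ need not be unique, which is why one can only hope for a one-sided derivative bound.

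Next I would prove \eqref{eq:norm-diff} by a direct comparison. Fix $t$ with $g(t)>0$, fix a maximizer $x_t$, and let $h \in (0,t)$. Since $g(t-h) = \sup_y v(t-h,y) \ge v(t-h,x_t)$,
\[
  g(t) - g(t-h) \le v(t,x_t) - v(t-h,x_t).
\]
Dividing by $h>0$ and letting $h \downarrow 0$, the right-hand side tends to $\frac{\partial v}{\partial t}(t,x_t)$ — this is a genuine limit, not merely a $\limsup$, because $s \mapsto v(s,x_t)$ is differentiable at $s=t$ by hypothesis. Therefore
\[
  \frac{d^-}{dt} g(t) = \limsup_{h \downarrow 0} \frac{g(t) - g(t-h)}{h} \le \frac{\partial v}{\partial t}(t,x_t),
\]
which is \eqref{eq:norm-diff}. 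In particular the difference quotients $\frac{g(t)-g(t-h)}{h}$ are bounded above for all small $h>0$, so the upper-left derivative is bounded above by the finite quantity $\frac{\partial v}{\partial t}(t,x_t)$.

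For the boundedness assertion in the other direction, I would first note that $g$, being a supremum of functions each continuous in $t$, is lower semicontinuous, so when $g(t)>0$ one has $g(t-h)>0$ for all small $h$, and hence a maximizer $x_{t-h}$ of $v(t-h,\cdot)$ exists and $g(t) - g(t-h) \ge v(t,x_{t-h}) - v(t-h,x_{t-h})$; in the settings where this proposition is applied $\partial_t v$ is locally bounded, and then this inequality together with the mean value theorem in $t$ bounds $\frac{d^-}{dt} g(t)$ from below as well, giving a genuinely finite value. The one genuinely delicate point, and the step I would be most careful with, is the attainment of the supremum: this is where $C_0$ enters, and without it the comparison point $x_t$ need not exist; the rest is an elementary difference-quotient manipulation. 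The statement is the $C_0(\R^d)$-weighted analogue of the Hilbert-space computation in Proposition~D.4 of \cite{dpz-book}, with ``attainment of the supremum via $C_0$'' playing the role that the inner-product structure plays there.
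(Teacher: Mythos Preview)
Your proof is correct and follows essentially the same approach as the paper's: fix a maximizer $x_t$ (which exists because $v(t,\cdot)\in C_0$ and $\sup_x v(t,x)>0$), use $g(t-h)\ge v(t-h,x_t)$ to bound the difference quotient by $\frac{v(t,x_t)-v(t-h,x_t)}{h}$, and pass to the limit. Your treatment is in fact slightly more careful than the paper's, which does not spell out the compactness argument for attainment and does not discuss the lower bound at all.
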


\begin{proof}
  Let $t>0$ and assume that $\sup_{x \in \mathbb{R}^d} v(t,x)>0$. Because $v(t,\cdot) \in C_0(\mathbb{R}^d)$, a maximum is attained at at least one $x_t \in \mathbb{R}^d$.

  For any $h\in (0,t)$, it trivially holds that
  \begin{equation}
    v(t-h, x_t) \leq \sup_{x \in \mathbb{R}^d} v(t-h,x).
  \end{equation}
  Then by the definition of upper-left-derivative,
  \begin{align}
    &\frac{d^-}{dt} \sup_{x \in \mathbb{R}^d} v(t,x) \nonumber\\
    &=  \limsup_{h \downarrow 0}\frac{\sup_{x \in \mathbb{R}^d} v(t,x) - \sup_{x \in \mathbb{R}^d} v(t-h,x)}{h}\nonumber\\
    & \leq \limsup_{h \downarrow 0} \frac{ v(t,x_t) - v(t-h,x_t)}{h}\nonumber\\
    & \leq \frac{\partial v}{\partial t} (t,x_t).
  \end{align}
\end{proof}

\end{appendix}

\bibliographystyle{amsplain}
\bibliography{super-linear}
\end{document}